\renewcommand{\le}{\leqslant}
\renewcommand{\ge}{\geqslant}
\newtheorem{theorem}{Theorem}
\newtheorem{lemma}[theorem]{Lemma}
\newtheorem{proposition}[theorem]{Proposition}
\theoremstyle{definition}
\numberwithin{equation}{section}
\renewcommand\rho{\varrho}
\title{A Bombieri-Vinogradov-type theorem with prime power moduli}
\subjclass[2010]{
	11J71, %Distribution modulo one
	11K41, %Continuous, p-adic and abstract analogues
	11K60, %Diophantine approximation (probabilistic number theory)
	11L20, %Sums over primes
	11L40, %Estimates on character sums
	11N05, %Distribution of primes
	11N13, %Primes in congruence classes
	11N35, %Sieves
    11N36, %Applications of sieve methods
}
\keywords{$p$-adic numbers, distribution modulo one, Diophantine approximation, sieve methods,  sums over primes}
\author{Stephan~Baier}
\address{Stephan~Baier\\
	Ramakrishna Mission Vivekananda Educational Research Institute\\
	Department of Mathematics\\
	G.\ T.\ Road, PO~Belur Math, Howrah, West Bengal~711202\\
	India}
\email{stephanbaier2017@gmail.com}
\urladdr{https://www.researchgate.net/profile/Stephan\_Baier2}
\author{Sudhir Pujahari}
\address{School of Mathematical Sciences, National Institute of Science Education and Re- search, Bhubaneswar, An OCC of Homi Bhabha National Institute,  P. O. Jatni,  Khurda 752050, Odisha, India.}
\email{spujahari@niser.ac.in}
\urladdr{https://sites.google.com/site/sudhirkumarpujahari/home}
\begin{document}
\maketitle

\begin{abstract} In 2020, Roger Baker \cite{Bak} proved a result on the exceptional set of moduli in the prime number theorem for arithmetic progressions of the following kind. Let $\mathcal{S}$ be a set of pairwise coprime moduli $q\le x^{9/40}$. Then the primes $l\le x$ distribute as expected in arithmetic progressions mod $q$, except for a subset of $\mathcal{S}$ whose cardinality is bounded by a power of $\log x$. We use a $p$-adic variant Harman's sieve to extend Baker's range to 
$q\le x^{1/4-\varepsilon}$ if $\mathcal{S}$ is restricted to prime powers $p^N$, where $p\le (\log x)^C$ for some fixed but arbitrary $C>0$. For large enough $C$, we thus get an almost all result. Previously, an asymptotic estimate for $\pi(x;p^N,a)$ of the expected kind, with $p$ being an odd prime, was established in the wider range $p^N\le x^{3/8-\varepsilon}$ by Barban, Linnik and Chudakov \cite{BLC}. Gallagher \cite{Gal} extended this range to $p^N\le x^{2/5-\varepsilon}$ and Huxley \cite{Hux2} improved Gallagher's exponent to $5/12$. A lower bound of the correct order of magnitude was recently established by Banks and Shparlinski \cite{BaS} for the even wider range $p^N\le x^{0.4736}$. However, all these results hold for {\it fixed} primes $p$, and the $O$-constants in the relevant estimates depend on $p$. Therefore, they do not contain our result. In a part of our article, we describe how our method relates to these results.   
\end{abstract}

\bigskip

\tableofcontents

\section{Main results} \label{results}
Let $\Lambda(n)$ be the von Mangoldt function, $q$ be a positive integer and $a$ be an integer coprime to $q$. For $x\ge 2$ let 
$$
E(x;q,a):=\sum\limits_{\substack{n\le x\\ n \equiv a \bmod{q}}} \Lambda(n)- \frac{x}{\varphi(q)}
$$
be the error term in the prime number theorem for the arithmetic progression $a \bmod{q}$, and set
$$
E(x,q):=\max\limits_{\substack{a\\ (a,q)=1}} |E(x;q,a)| \quad \mbox{and} \quad E^{\ast}(x,q):=\max\limits_{y\le x} |E(y,q)|.
$$
Set $\mathcal{L}:=\log x$ throughout the sequel. The Bombieri-Vinogradov theorem implies that 
$$
E^{\ast}(x,q)\le \frac{x}{\varphi(q)\mathcal{L}^A}
$$
for all integers $q\in (Q,2Q]$ with at most $O(Q\mathcal{L}^{-A})$ exceptions, provided that $Q\le x^{1/2}\mathcal{L}^{-2A-6}$. Under GRH, the above inequality would hold for all $q\le x^{1/2-\varepsilon}$. Roger Baker \cite{Bak} proved the following result, significantly restricting the set of exceptional moduli. The cost is that $Q$ is restricted to a smaller interval as well. 

\begin{theorem}[Baker] Let $Q\le x^{9/40}$. Let $\mathcal{S}$ be a set of pairwise relatively prime integers in $(Q,2Q]$. Then the number of $q$ in $\mathcal{S}$ for which
$$
E^{\ast}(x,q)>\frac{x}{\varphi(q)\mathcal{L}^A}
$$ 
is $O(\mathcal{L}^{34+A})$. 
\end{theorem}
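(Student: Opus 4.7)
The plan is to reduce the exceptional-set bound to a large-value estimate for Dirichlet polynomials twisted by primitive characters of pairwise coprime conductors, and to control the resulting large-value count via a log-free zero-density estimate for Dirichlet $L$-functions.

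First I would apply orthogonality of characters to write, for every $y\le x$,
\[
E(y;q,a)=\frac{1}{\varphi(q)}\sum_{\chi\ne\chi_{0}}\bar\chi(a)\psi(y,\chi)+O(\mathcal{L}),
\qquad \psi(y,\chi):=\sum_{n\le y}\Lambda(n)\chi(n),
\]
so each exceptional $q\in\mathcal{S}$ would supply some $y_{q}\le x$ and a non-principal $\chi\bmod q$, induced by a primitive character $\chi^{*}$ of some conductor $q^{*}\mid q$ with $|\psi(y_{q},\chi^{*})|\gg x\mathcal{L}^{-A-1}$. The pairwise coprimality of $\mathcal{S}$ combined with $q>Q$ forces the induced primitive conductors $q^{*}>1$ to be pairwise coprime, so distinct exceptional moduli yield distinct primitive characters of conductor $\le 2Q$; it thus suffices to bound the number of primitive characters producing such a large value by $\mathcal{L}^{34+A}$.

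Next I would apply Heath-Brown's identity to $\psi(y_{q},\chi^{*})$, decomposing it into $O(\mathcal{L}^{c})$ bilinear pieces of Type I shape $\sum_{m\sim M}\sum_{n\sim N}a_{m}\chi^{*}(mn)$, with $\{a_{m}\}$ a bounded convolution of divisor-type coefficients and $n$ unweighted, and of Type II shape $\sum_{m\sim M}\sum_{n\sim N}a_{m}b_{n}\chi^{*}(mn)$ with $M,N$ both bounded below. After dyadic decompositions and Perron's formula each piece becomes the integral of a Dirichlet polynomial $P(s,\chi^{*})$ along a short vertical segment on $\Re s=1/2$; one of these pieces must therefore contribute $\gg x\mathcal{L}^{-A-c'}$, producing, at some height $|t|\ll\mathcal{L}^{A+c'}$, a value of $|P(\tfrac12+it,\chi^{*})|$ above a threshold depending on the dyadic configuration. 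The key estimate is then a log-free large-value/zero-density bound of Huxley--Jutila type,
\[
\sum_{q\le Q}\sum^{*}_{\chi\bmod q}N(\sigma,T,\chi)\ll (Q^{2}T)^{c(1-\sigma)}\mathcal{L}^{C},
\]
which converts the existence of such a large value into membership in a set of primitive characters of cardinality at most $\mathcal{L}^{34+A}$, provided $Q\le x^{9/40}$. The pairwise coprimality ensures no loss occurs in replacing the sum over all primitive conductors $\le 2Q$ by the sum over $\mathcal{S}$, which is what distinguishes Baker's theorem from classical Bombieri--Vinogradov.

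The main obstacle is the Type I$_{2}$ range, where the free factor has length $x^{1/4}<N<x^{1/2}$: here direct P\'{o}lya--Vinogradov is too weak, and the bilinear treatment requires splitting $P(s,\chi^{*})$ into two shorter polynomials, applying the zero-density theorem to the longer factor while controlling the shorter by a pointwise fourth-moment bound. Balancing these two ranges is what fixes the exponent $9/40$ and the power $34+A$ of $\mathcal{L}$ in the exceptional count; keeping every auxiliary estimate polylogarithmic (rather than accepting a small power of $Q$) is the delicate point, and is precisely what the log-free zero-density input coupled with the pairwise coprime condition is designed to ensure.
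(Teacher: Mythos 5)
This theorem is not proved in the paper under review; it is quoted verbatim from Baker's 2020 Acta Arithmetica paper (reference \cite{Bak}) purely as motivation, so there is no ``paper's own proof'' for you to be compared against here. Your opening reduction is nonetheless correct and does capture the mechanism that makes the pairwise-coprime hypothesis powerful: each exceptional $q\in\mathcal{S}$ forces a non-principal character $\chi\bmod q$, hence a primitive $\chi^{*}$ of conductor $q^{*}\mid q$, $q^{*}>1$, with $|\psi(y_q,\chi^{*})|\gg x\mathcal{L}^{-A-1}$, and since the moduli in $\mathcal{S}$ are pairwise coprime the induced conductors are pairwise coprime and the resulting primitive characters are distinct. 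Moreover Siegel--Walfisz already rules out $q^{*}\leq\mathcal{L}^{B}$, so the count reduces to counting distinct primitive characters of conductor in $(\mathcal{L}^{B},2Q]$ producing a large value of $\psi$.

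The remainder of your sketch, however, does not actually close the argument, and in at least two places it wanders. First, you pass to Heath--Brown's identity and Perron's formula, which produces Dirichlet \emph{polynomials}, and then immediately invoke a zero-density bound $\sum_{q}\sum^{*}_{\chi}N(\sigma,T,\chi)\ll(Q^{2}T)^{c(1-\sigma)}\mathcal{L}^{C}$. These are two distinct routes: one counts characters giving a large value of an explicit Dirichlet polynomial (large-value theorems), the other counts zeros of $L(s,\chi)$ near $\sigma=1$ (which one would reach via the explicit formula, not via a combinatorial identity). You need to commit to one. Second, you assert rather than derive the key numbers. With the quoted zero-density shape and $\sigma = 1 - O(\log\mathcal{L}/\mathcal{L})$, $T=\mathcal{L}^{O(1)}$, the character count is $\mathcal{L}^{2c\theta\cdot O(1)}$ with $Q=x^{\theta}$, so the exponent $9/40$ would have to come out as $1/(2c)$ for the precise value of the density constant $c$; you do not show that the Huxley--Jutila inputs produce $c=20/9$, nor do you track where $34$ arises. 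The ``Type I$_{2}$'' discussion and ``pointwise fourth-moment bound'' are placeholders, not estimates. As far as I am aware, Baker's actual argument in \cite{Bak} is of a more elementary flavour: Heath--Brown's identity plus the large sieve give an $L^{1}$-in-$q$ bound of the form $\sum_{q\in\mathcal{S}}\varphi(q)E^{\ast}(x,q)\ll x\mathcal{L}^{34}$ (from which the exceptional count $\mathcal{L}^{34+A}$ follows by Chebyshev), and the pairwise-coprimality is exploited in the Type I treatment to avoid the $x^{5/6}Q$-type loss that caps the classical basic mean value theorem at $Q\leq x^{1/6}$. The numerology $9/40$ then comes from balancing the improved Type I bound against the large-sieve Type II bound, not from a log-free zero-density input. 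So your proposal is both a different route from the source and, as written, incomplete: the exponents are asserted, not proved.
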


We extend the above range to $Q\le x^{1/4-\varepsilon}$ for the case when $\mathcal{S}$ consists of powers of primes $p$ such that $p\le \mathcal{L}^C$ for some fixed but arbitrary constant $C>0$. Our bound for the cardinality of the exceptional set will be slightly different: We get $16+2A$ in place of $34+A$ in the exponent of the logarithm (see Theorem \ref{mainresult} below). Here we consider the prime counting function instead of the summatory function of the von Mangoldt function. 
Let 
$$
\pi(x;q,a):=\sharp\{p\le x : p \mbox{ prime and } p\equiv a \bmod{q}\},
$$
$$
F(x;q,a):=\pi(x;q,a)- \frac{1}{\varphi(q)} \int\limits_2^x \frac{dt}{\log t}
$$
and 
$$
F(x,q):=\max\limits_{\substack{a\\ (a,q)=1}} |F(x;q,a)| \quad \mbox{and} \quad  F^{\ast}(x,q):=\max\limits_{y\le x} |F(y,q)|.
$$ 
We prove the following.

\begin{theorem} \label{mainresult} Fix $C\ge 6$ and $\varepsilon>0$. Assume that $x^{\varepsilon}\le Q\le x^{1/4-\varepsilon}$. Let $\mathcal{S}\subseteq (Q,2Q]\cap \mathbb{N}$ be a set of powers of distinct primes $p\le \mathcal{L}^C$. Then the number of $q$ in $\mathcal{S}$ for which 
$$
F^{\ast}(x,q)>\frac{x}{\varphi(q)\mathcal{L}^A}
$$
is $O_{C,\varepsilon}(\mathcal{L}^{16+2A})$. 
\end{theorem}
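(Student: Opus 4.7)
The plan is to run Baker's overall architecture---reduce $F^{\ast}(x,q)$ to bilinear forms via Harman's sieve, then extract the exceptional-set bound by duality---but replace the real-variable Diophantine input by $p$-adic estimates tailored to prime-power moduli $q = p^N$ with $p \le \mathcal{L}^C$.

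First, for each $y \le x$, write $\pi(y;q,a) - \varphi(q)^{-1}\int_2^y dt/\log t$ as a combination of Type~I sums $\sum_{m \sim M}\alpha_m\sum_n \mathbf{1}_{mn \le y,\, mn \equiv a \bmod q}$ and Type~II sums $\sum_{m\sim M}\sum_{n\sim N}\alpha_m\beta_n\mathbf{1}_{mn \equiv a \bmod q}$, with divisor-bounded coefficients supplied by Harman's sieve. In the range $q \le x^{1/4-\varepsilon}$, the sieve parameters can be arranged so that $M$ and $N$ stay comfortably below $x^{1/2}$, placing the bilinear forms in a regime accessible by Cauchy--Schwarz combined with a character-sum estimate modulo $p^N$.

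For the Type~II sums, open the congruence into Dirichlet characters modulo $p^N$ and invoke Postnikov's formula, which on $1 + p\mathbb{Z}/p^N\mathbb{Z}$ identifies any character with an additive character built from an explicit $p$-adic logarithm. The resulting exponential sum is amenable to $p$-adic Weyl differencing; combined with Cauchy--Schwarz in the outer variable, this produces a saving of size $x^{-\delta}$ per non-principal character, non-trivial as soon as $q \le x^{1/4-\varepsilon}$. The Type~I contribution is handled by a $p$-adic P\'olya--Vinogradov-type estimate. The implied constants in both bounds are polynomial in $p$, hence polynomial in $\mathcal{L}$ under $p\le\mathcal{L}^C$, and are absorbed into the overall $O_{C,\varepsilon}$ constant.

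The exceptional-set bound is then extracted by duality. Given $R$ bad moduli $q \in \mathcal{E} \subseteq \mathcal{S}$, attach unimodular weights $\delta_q$ and study $\sum_{q\in\mathcal{E}}\delta_q F(y_q;q,a_q)$. Since any two elements of $\mathcal{S}$ are coprime (being powers of distinct primes), the off-diagonal terms after squaring decompose cleanly via the Chinese Remainder Theorem with no arithmetic loss, reducing the double sum to the bilinear bounds established above. Comparing the lower bound $\gg R\cdot x/(Q\mathcal{L}^A)$ with the resulting upper bound yields $R = O(\mathcal{L}^{16+2A})$. The main technical obstacle is ensuring uniformity in $p$ of the $p$-adic Weyl step: one must keep the implied constants polynomial in $p$ and avoid a $p^{O(1)}$ loss that would swamp the $x^\varepsilon$ margin, and it is the hypothesis $p\le\mathcal{L}^C$ with $C$ sufficiently large (here $C\ge 6$) that makes this controllable.
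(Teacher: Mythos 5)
Your overall framing---Harman's sieve to reduce $F^{\ast}(x,q)$ to Type~I and Type~II bilinear forms, then a Chebyshev/duality argument to extract the exceptional-set count---matches the paper's skeleton, but the core analytic mechanism you propose is different from the paper's and, as stated, does not deliver the theorem.

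The central issue is your Type~II treatment. You propose Postnikov's formula plus $p$-adic Weyl differencing to obtain, for each nonprincipal $\chi \bmod p^N$, a \emph{pointwise} power saving $x^{-\delta}$ with implied constants ``polynomial in $p$.'' This is essentially the Barban--Linnik--Chudakov / Gallagher / Huxley route, and the paper explicitly points out (end of Section~1) that the $O$-constants there depend on $p$ in a way that is \emph{not} controlled uniformly as $p$ grows with $x$---which is precisely why those results do not contain Theorem~\ref{mainresult}. If the dependence were genuinely polynomial in $p$, Gallagher's or Huxley's theorem would immediately give an asymptotic for $\pi(x;p^N,a)$ uniformly for $p\le\mathcal L^C$ in the wider range $p^N\le x^{2/5-\varepsilon}$ (or $x^{5/12-\varepsilon}$), with \emph{no} exceptional set at all, which is a much stronger statement than the theorem and is not known. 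So the step ``implied constants polynomial in $p$, absorbed into $O_{C,\varepsilon}$'' is exactly the unproved claim, and the proposal does not supply a proof of it. In contrast, the paper deliberately avoids any pointwise character-sum estimate: after a dyadic split and Perron's formula to disentangle ranges, it applies Cauchy--Schwarz, squares (a dispersion argument), detects the residual congruences with Dirichlet characters, and then uses the \emph{large sieve} over all primitive characters of conductor $p^h$ with $2\le h\le N_p$ and $p\in\mathcal P$ simultaneously. The saving comes from averaging over the set $\mathcal P$, which is why an exceptional set is unavoidable and why the argument works without zero-free-region input.

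Two further mismatches. First, your Type~I estimate via a ``$p$-adic P\'olya--Vinogradov'' bound is unnecessary and not what the paper does: with $\lambda = 1/p^{N-1}$ the two inner counts $\sum_{n\le y/m,\, n\equiv e\bar m\,(p^N)}1$ and $\lambda\sum_{n\le y/m,\, n\equiv d\bar m\,(p)}1$ cancel to $O(1)$ by trivial counting of integers in a residue class; this cancellation is precisely why prime-power moduli are needed (the factor $p/p^N$ equals $\varphi(p)/\varphi(p^N)$), and the paper flags this in a remark. Second, the paper does not need a separate duality step with unimodular weights and CRT: the averaged Harman sieve (Proposition~\ref{Harmanasymp1}) already gives a second-moment bound $\sum_{p\in\mathcal P}|F(y_p;p^{N_p},e_p)|^2 = O(x^2\mathcal L^{16}/Q^2)$, from which Chebyshev yields the $O(\mathcal L^{16+2A})$ bound directly. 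Your duality step is broadly compatible in spirit, but it hinges on bilinear estimates you have not established, since your proposed Type~II mechanism is the part that does not go through.
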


We need to make sure that it is actually {\it possible} that $\mathcal{S}$ has a cardinality much larger than $\mathcal{L}^{16+2A}$. This is the case if $C$ is sufficiently large as compared to $A$ and the relevant set $\mathcal{P}$ of primes $p\le \mathcal{L}^C$ is not too sparse. We have the following lower bound for the number of prime powers in question.

\begin{lemma} \label{SQ} Fix $C\ge 6$ and $\varepsilon>0$. Assume that $x^{\varepsilon}\le Q\le x$. Then if $x$ is large enough, the number $S(Q)$ of prime powers $p^N$ in the interval $(Q,2Q]$ such that $p\le \mathcal{L}^C$ satisfies
$$
S(Q)\gg \mathcal{L}^{C-1}.
$$ 
\end{lemma}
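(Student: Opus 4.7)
The plan is to exhibit, for a single well-chosen exponent $N$, at least $\gg_C\mathcal{L}^{C-1}$ primes $p\le\mathcal{L}^C$ whose $N$-th powers lie in $(Q,2Q]$. Equivalently, this amounts to counting primes in the short interval $(Q^{1/N},(2Q)^{1/N}]$, with $N$ chosen so that its right endpoint sits just below $\mathcal{L}^C$.

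First, I would set $N:=\lceil \log(2Q)/(C\log\mathcal{L})\rceil$. By construction $(2Q)^{1/N}\le \mathcal{L}^C$, while the minimality of $N$ yields $(2Q)^{1/(N-1)}>\mathcal{L}^C$, hence $(2Q)^{1/N}\ge \mathcal{L}^{C(N-1)/N}=\mathcal{L}^C\cdot\mathcal{L}^{-C/N}$. Because $\log Q\asymp_\varepsilon \mathcal{L}$ in the range $x^\varepsilon\le Q\le x$, one has $N\asymp_{C,\varepsilon}\mathcal{L}/\log\mathcal{L}$, so $\mathcal{L}^{-C/N}=\exp\bigl(-O_{C,\varepsilon}((\log\mathcal{L})^2/\mathcal{L})\bigr)=1+o(1)$. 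Consequently, both endpoints $Q^{1/N}$ and $(2Q)^{1/N}$ lie in $[\tfrac{1}{2}\mathcal{L}^C,\mathcal{L}^C]$ once $x$ is large enough, and, using $2^{1/N}-1\sim(\log 2)/N$, the length of the interval satisfies
\[
h:=(2Q)^{1/N}-Q^{1/N}=Q^{1/N}\bigl(2^{1/N}-1\bigr)\asymp_C \mathcal{L}^C\cdot\frac{\log\mathcal{L}}{\mathcal{L}}=\mathcal{L}^{C-1}\log\mathcal{L}.
\]

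Next, I would invoke Huxley's short-interval prime number theorem, which gives $\pi(y+h)-\pi(y)\gg h/\log y$ whenever $h\ge y^{7/12}$ and $y$ is large. With $y=Q^{1/N}\le\mathcal{L}^C$, the hypothesis reduces to $\mathcal{L}^{C-1}\log\mathcal{L}\gg\mathcal{L}^{7C/12}$, i.e., $5C/12\ge 1$; this is amply satisfied under the assumption $C\ge 6$. Huxley's bound then yields
\[
\#\bigl\{\,p\text{ prime}:Q^{1/N}<p\le (2Q)^{1/N}\,\bigr\}\gg_C \frac{h}{\log(\mathcal{L}^C)}\gg_C \mathcal{L}^{C-1}.
\]
Each such prime automatically satisfies $p\le\mathcal{L}^C$, has $p^N\in(Q,2Q]$, and yields a prime power distinct from the others, so these contribute at least $\gg_C\mathcal{L}^{C-1}$ elements to $S(Q)$.

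The only delicate ingredient is the application of the short-interval prime number theorem; the Huxley exponent $7/12$ enforces the threshold $C>12/5$, and the hypothesis $C\ge 6$ in the statement leaves plenty of room. Any weaker Hoheisel-type prime-gap estimate would equally suffice, so the obstacle here is conceptual rather than technical.
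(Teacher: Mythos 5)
Your proposal is correct and follows essentially the same route as the paper: you choose the identical exponent $N=\lceil\log(2Q)/(C\log\mathcal{L})\rceil$ (the paper writes $n=\lceil\log 2Q/\log\mathcal{L}^C\rceil$, which is the same quantity), reduce to counting primes in the short interval $(Q^{1/N},(2Q)^{1/N}]$, and invoke Huxley's short-interval prime number theorem, with the hypothesis $C\ge 6$ ensuring the interval is long enough. The only cosmetic difference is in how the applicability of Huxley's theorem is verified (the paper relaxes $7/12+\varepsilon$ to $2/3$ and checks $n\le Q^{1/(3n)}$, whereas you estimate $N\asymp\mathcal{L}/\log\mathcal{L}$ and compare exponents directly), but the underlying argument is the same; do note that the Huxley threshold is $y^{7/12+\varepsilon}$ rather than $y^{7/12}$, a point you implicitly absorb but should state.
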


\begin{proof} Let 
$$
n:=\left\lceil \frac{\log 2Q}{\log \mathcal{L}^{C}} \right\rceil.
$$
Then $(2Q)^{1/n}\le \mathcal{L}^C$ and hence
$$
S(Q)\ge \pi\left((2Q)^{1/n}\right)-\pi\left(Q^{1/n}\right)
$$
since every power $p^n$ of a prime $p\in (Q^{1/n},(2Q)^{1/n}]$ is  an element of $(Q,2Q]$.  In the above inequality $\pi(x)$ is the number of primes less than equal to $x$.
We note that
$$
(2Q)^{1/n}-Q^{1/n}=\left(2^{1/n}-1\right)Q^{1/n}=\left(\frac{\log 2}{n} + O\left(\frac{1}{n^2}\right)\right)Q^{1/n}.
$$
Now we invoke Huxley's prime number theorem \cite{Hux} which ensures that 
\begin{equation} \label{Huxl}
\pi(x+y)-\pi(x) \sim \frac{y}{\log x} \quad \mbox{ as } x\rightarrow\infty,
\end{equation}
provided that $y\ge x^{7/12+\varepsilon}$. Hence, we have 
\begin{equation} \label{Huxley}
\pi\left((2Q)^{1/n}\right)-\pi\left(Q^{1/n}\right)\sim \frac{\log 2}{n}\cdot \frac{Q^{1/n}}{\log Q^{1/n}} = \frac{\log 2}{\log Q}\cdot Q^{1/n},
\end{equation}
provided that 
$$
\frac{Q^{1/n}}{n} \ge Q^{2/(3n)}
$$
and $Q$ is large enough. The above inequality is equivalent to 
\begin{equation} \label{easy}
n\le Q^{1/(3n)}.
\end{equation}
Moreover,
$$
n\le \frac{2\log Q}{\log \mathcal{L}^C}
$$
if $Q\ge x^{\varepsilon}$ and $x$ is large enough, and thus 
$$
Q^{1/(3n)} \ge Q^{(\log \mathcal{L}^C)/(6\log Q)} = e^{(\log \mathcal{L}^C)/6}=\mathcal{L}^{C/6}.
$$
Therefore, \eqref{easy} holds if 
$$
2\log Q \le \mathcal{L}^{C/6}\log \mathcal{L}^C. 
$$
This is the case under the conditions $C\ge 6$ and $Q\le x$ in the lemma if $x$ is large enough. Hence, \eqref{Huxley} holds. We further observe that
$$
Q^{1/n}\ge Q^{1/\left(1+\log(2Q)/\log \mathcal{L}^C\right)} = Q^{\left(1+O\left(\log \mathcal{L}^C/\log Q\right)\right) \log \mathcal{L}^C/\log Q}
\gg \mathcal{L}^{C}  
$$  
under the condition $Q\ge x^{\varepsilon}$ of the lemma if $x$ is large enough. Hence, we obtain
$$
\mathcal{S}(Q)\gg \frac{\mathcal{L}^C}{\log Q}\gg \mathcal{L}^{C-1},
$$
which completes the proof.
\end{proof}

It follows that if we take $\mathcal{P}$ as large as possible, i.e. equal to the set of all primes $p\le \mathcal{L}^C$, then Theorem \ref{mainresult} is non-trivial provided that $C>17+2A$. 

For any {\it fixed} odd prime number $p$, Barban, Linnik and Chudakov \cite{BLC} proved that an asymptotic of the form
\begin{equation} \label{BaLiCh}
\pi(x;p^N,a)= \frac{1}{\varphi(p^N)} \int\limits_2^x \frac{dt}{\log t} \cdot \left(1+O\left(\frac{1}{\mathcal{L}^A}\right)\right)
\end{equation}
holds for {\it single} residue classes modulo a prime power $p^N$ in the wider range $p^N\le x^{3/8-\varepsilon}$. The exponent $3/8$ was improved to $2/5$ by Gallagher \cite{Gal} and to $5/12$ by Huxley \cite{Hux2}. In recent work, Banks and Shparlinski \cite{BaS} obtained a further improvement to $0.4736$, with a lower bound instead of the asymptotic estimate \eqref{BaLiCh}. However, the $O$-constants in all these results depend on $p$ and not just on $A$. Therefore, they do not contain our Theorem \ref{mainresult}. 

We also mention that Iwaniec \cite{Iwa} extended Gallagher's work to power-full moduli and that Gallagher's result is a special case of a later result by Guo \cite{Guo} who elaborated on work by Elliott \cite{Ell} in which a Bombieri-Vinogradov-type theorem for moduli divisible by a power-full number was established. Gallagher, in turn, made use of Postnikov's \cite{Pos} important investigations of characters to prime power moduli which led to many new results. \\

{\bf Acknowledgements:} The authors would like to thank Igor Shparlinski for useful discussions.   The authors would also like to thank Akshaa Vatwani for her comment on an earlier version of the paper and the anonymous referee for careful reading and many useful comments.  The second author was supported by ERCIM `Alain Bensoussan' Fellowship Programme while some part of the project was carried out.  

\section{Remarks on our method}
In this section, we indicate the idea of our approach and  describe how it relates to the above-mentionded works \cite{BLC}, \cite{Gal}, \cite{Hux2} and \cite{BaS}. Our method to prove Theorem \ref{mainresult} relies on Harman's asymptotic sieve (see Proposition \ref{Harmanasymp} below) and the large sieve. The idea is to compare the number of primes in the sets 
$$
\mathcal{A}:=\{n\le y \ :\ n\equiv e \bmod{p^N}\}
$$ 
and 
$$
\mathcal{B}:=\{n\le y \ :\  n\equiv d \bmod{p}\}
$$
with $2\le y\le x$, where $e\equiv d \bmod{p}$ so that $\mathcal{A} \subseteq \mathcal{B}$. If $p\le \mathcal{L}^C$, then the Siegel-Walfisz theorem tells us that the number of primes in $\mathcal{B}$ is as expected. The idea to use a version of Harman's sieve to sift for primes in residue classes is not new. Harman himself did this in his papers \cite{HarCarmI} and \cite{HarCarmII} on Carmichael numbers for more general moduli $q$.  Here we focus on prime power moduli. Our method may therefore be viewed as a $p$-adic variant of Harman's sieve. We sieve for primes in a set $\mathcal{A}$ of integers whose $p$-adic distance to a given integer $e$ is small. Their cardinality is compared to that of the primes in a set $\mathcal{B}$ of integers whose $p$-adic distance to $e$ is potentially large.  

If we tried to apply Theorem \ref{Harmanasymp} below to a single modulus $p^N$, we would not obtain the type II information required to establish that
$$
\left|F\left(y;p^N,e\right)\right| \ll \frac{x}{\varphi(p^N)\mathcal{L}^A}
$$ 
for any fixed $A>0$.
Only if we consider a set of sufficiently many residue classes $e \bmod{p^N}$, we obtain the type II information required to establish an almost all result as in Theorem \ref{mainresult} by using the large sieve. One runs into the same issue if one tries to apply Theorem \ref{Harmanasymp} directly to detect primes in short intervals. This suggests that a $p$-adic version of the sieves developed by Harman and his co-authors to detect primes in short intervals would give analog results for primes in arithmetic progressions to prime power moduli. Below we have a closer look at this relation. 

In \cite[sections 7.2,7.3]{HarPri}, Harman describes how to overcome the said issue of lacking type II information by a refinement of his sieve, allowing him to recover Huxley's prime number theorem for short intervals (see \eqref{Huxl} above). Here one takes $\mathcal{A}=I\cap \mathbb{N}$ and $\mathcal{B}=I_1\cap \mathbb{N}$, where $I$ is a short interval and $I_1$ is a large interval. He decomposes the functions measuring the cardinalities of the sifted sets into trilinear sums of the form
$$
\sum\limits_{\substack{m\sim M\\ n\sim N\\ r\sim R\\ mnr \in \mathcal{A}}} a_mb_nc_r,
$$
where the coefficients $c_r$ satisfy a condition of the form
$$
\sum\limits_{r\sim R} c_r r^{it} \ll \frac{R}{\mathcal{L}^A}
$$
for all $t$ in a suitable range. This condition holds if $c_r=c(r)$ is the indicator function of the primes, in particular. It allows to approximate the above trilinear sums efficiently when the condition $mnr\in \mathcal{A}$ is picked out using Perron's formula, leading to an extra factor of $(mnr)^{it}=m^{it}n^{it}r^{it}$. We may try a similar approach to our problem. In our case, $mnr\in \mathcal{A}$ is a congruence condition which is, most naturally, picked out using Dirichlet characters modulo $p^N$, leading to an extra factor of $\chi(mnr)=\chi(m)\chi(n)\chi(r)$. The above condition on the coefficients $c_r$ then becomes
$$
\sum\limits_{r\sim R} c_r\chi(r) \ll \frac{R}{\mathcal{L}^A}, 
$$ 
which takes the from
\begin{equation*}
\sum\limits_{\substack{r\sim R \\ r \mbox{\ \rm \scriptsize prime}}} \chi(r)\ll \frac{R}{\mathcal{L}^A}
\end{equation*}
for $c_r=c(r)$ the indicator function of the primes. In fact, one needs, more generally, bounds for hybrid sums over primes of the form  
 \begin{equation} \label{form}
\sum\limits_{\substack{r\sim R \\ r \mbox{\ \rm \scriptsize prime}}} \chi(r)r^{it}\ll \frac{R}{\mathcal{L}^A},
\end{equation}
where the extra term $r^{it}$ comes in by a lemma which Harman terms ``cosmetic surgery'' (see Lemma \ref{CoSu} in the appendix), designed to remove dependencies of summation variables $a$ and $b$ by  inequalities of the form $a<b$. The mean value estimates for Dirichlet polynomials which are of key importance in Harman's method in \cite[sections 7.2,7.3]{HarPri} need to be replaced by corresponding large sieve estimates for character sums
$$
\sum\limits_{n\sim N} a_n\chi(n)
$$  
with general coefficients $a_n$ (in particular, large value estimates for character sums). 

In general, a bound like in \eqref{form} is not available if the conductor $q$ of $\chi$ is large. However, in the case of characters for large powers of a {\it fixed} odd prime $p$, such bounds are known to hold due to the aforementioned works \cite{BLC}, \cite{Gal}, \cite{Hux2} and \cite{BaS} in which particularly wide zero free regions for the corresponding Dirichlet $L$-functions were utilized.  If we want to establish the expected {\it asymptotic} for $\pi(x;p^N,a)$, we can therefore achieve this by a $p$-adic analogue of Harman's method in \cite[sections 7.2,7.3]{HarPri} as outlined above. This should give the said asymptotic if $p^N\le x^{5/12-\varepsilon}$, which recovers Huxley's result in \cite{Hux2}. We note that the works \cite{BLC}, \cite{Gal} and \cite{Hux2} do not use sieve methods but only depend on information about zeros of $L$-functions (zero-free regions, zero density estimates). If we allow for a lower bound instead of an asymptotic estimate, the exponent $5/12-\varepsilon$ is superseded by the exponent $0.4736$ achieved by Banks and Shparlinski \cite{BaS}. They obtained their result by a combination of a result in \cite{HarCarmII} and improved zero-free regions derived from new estimates for short character sums. Since their work uses \cite[Theorem 1.2]{HarCarmII}, their method depends on a version of Harman's sieve for arithmetic progressions, as in our setting. If a complete $p$-adic analogue of the sieve method used in \cite{BHP} to detect primes in short intervals (also described in \cite[section 7.4.]{HarPri}) were available, the exponent $0.4736$ achieved by Banks and Shparlinski could be further improved to $19/40=0.475$. A crucial role in the method of Baker, Harman and Pintz plays Watt's mean value theorem for Dirichlet polynomials (see \cite{Wat1}), for which we would need a character analogue in our setting. A character version of Watt's mean value theorem is indeed available thanks to the work in \cite{Wat2}. It was actually used in \cite{HarCarmII}, but this character version is not an exact analogue of Watt's mean value theorem for Dirichlet polynomials, as pointed out in the \cite[Remark on page 643]{HarCarmI}. This makes it difficult to push the exponent to the current limit of technology. It would be interesting if the said character version of Watt's mean value theorem can be improved in the case of prime power moduli. 

We point out that our approach does not work for general moduli but is specifically suited for prime power moduli. This restriction will be explained later when we deal with the type I sums. An extension of our results to power-full moduli $q$ whose radical is less or equal $\mathcal{L}^C$ should be possible, but we confine ourselves to prime power moduli for simplicity.  

The original motivation for this work was a $p$-adic analogue to the question of the distribution of $l\alpha$ modulo 1, where $\alpha$ is an irrational real number and $l$ runs over the primes. We will explain this connection in section \ref{connection}. 

We make a last remark on Theorem \ref{mainresult}, our main result in this article: Using Harman's lower bound sieve from \cite{Har}, it should be possible to derive a version of this theorem with asymptotic estimates for primes in arithmetic progressions replaced by lower bounds and the exponent $1/4$ by the larger value $7/22$. In this situation of lower bounds, a further improvement  (in the best case to the above-mentioned exponent 0.4736) could be possible using the method in [14], at the cost of a larger exceptional set of cardinality $O(\exp((\log x)^{\delta}))$ for some $\delta>0$ (this is still small compared to what the original Bombieri-Vinogradov theorem gives). Of course, for this result to be non-trivial, we would need to be able to establish our result for a set of moduli whose cardinality exceeds $\exp((\log x)^{\delta})$, which is not the case in the present paper. Indeed, our method is particularly well-suited for small sets of moduli.  

Our paper is organized as follows. In section 3, we provide the necessary tools. In section 4, we prove our main result, Theorem \ref{mainresult}. A connection to a $p$-adic problem in Diophantine approximation, which originally motivated this work, will be described in section 5. In section 6, the appendix, we give a full proof of an average version of Harman's asymptotic sieve.   

\section{Notations and preliminaries}
To generate a set of primes which we can sieve in, we use the Siegel-Walfisz theorem (see \cite[page 114-116]{Bru}, for example). 

\begin{proposition}[Siegel-Walfisz] 
Let $C>0$ be given. Then there exists a constant $D=D(C)>0$ such that for all $q\le \mathcal{L}^C$ and $a$ with $(a,q)=1$, we have
$$
\pi(x;q,a)=\frac{1}{\varphi(q)}\int\limits_{2}^x \frac{dt}{\log t} +
O\left(xe^{-D\sqrt{\log x}}\right).
$$
\end{proposition}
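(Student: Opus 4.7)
The plan is to follow the classical route. First I would pass from $\pi(x;q,a)$ to
\[\psi(x;q,a):=\sum_{\substack{n\le x\\ n\equiv a \bmod q}}\Lambda(n)\]
by partial summation against $1/\log t$, so that it suffices to show
\[\psi(x;q,a)=\frac{x}{\varphi(q)}+O\!\left(xe^{-D\sqrt{\log x}}\right)\]
uniformly for $q\le \mathcal{L}^C$ and $(a,q)=1$. Orthogonality of Dirichlet characters then gives
\[\psi(x;q,a)=\frac{1}{\varphi(q)}\sum_{\chi\bmod q}\bar\chi(a)\,\psi(x,\chi),\qquad \psi(x,\chi):=\sum_{n\le x}\chi(n)\Lambda(n),\]
reducing the problem to a uniform bound on $\psi(x,\chi)$ for each non-principal $\chi$, since the principal character already supplies the main term $x/\varphi(q)$ with a Prime Number Theorem error $O(xe^{-c\sqrt{\log x}})$.

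Next I would invoke the truncated explicit formula
\[\psi(x,\chi)=-\sum_{|\gamma|\le T}\frac{x^{\rho}}{\rho}+O\!\left(\frac{x\log^2(qxT)}{T}\right),\]
taken over the non-trivial zeros $\rho=\beta+i\gamma$ of $L(s,\chi)$, combined with the classical de la Vall\'ee Poussin zero-free region
\[L(s,\chi)\ne 0\quad\text{for}\quad \Re s>1-\frac{c}{\log(q(|t|+2))},\]
valid for every $\chi\bmod q$ with the possible exception of a single real zero attached to at most one real character. Choosing $T=\exp(\sqrt{\log x})$ and applying the standard contour-shift estimate, the contribution of all non-exceptional characters to $\psi(x;q,a)$ is easily absorbed into $O(xe^{-D\sqrt{\log x}})$ under the hypothesis $q\le \mathcal{L}^C$.

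The main obstacle, and the source of the ineffectivity of $D=D(C)$, is the possible Siegel zero $\beta$ of an exceptional real character. Here I would invoke Siegel's theorem: for every $\varepsilon>0$ there is an ineffective constant $C_1(\varepsilon)>0$ with $\beta\le 1-C_1(\varepsilon)q^{-\varepsilon}$. Applied with $\varepsilon=1/(2C)$, the assumption $q\le \mathcal{L}^C$ yields $1-\beta\gg (\log x)^{-1/2}$, and hence $x^{\beta}/\beta\ll xe^{-D\sqrt{\log x}}$ for a suitable $D=D(C)>0$. Summing over $\chi\bmod q$ loses at most a factor $\varphi(q)\le \mathcal{L}^C$, which is absorbed into $e^{-D\sqrt{\log x}}$ after slightly shrinking $D$. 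Transferring back from $\psi$ to $\pi$ by partial summation then finishes the proof.
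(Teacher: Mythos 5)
The paper does not prove this proposition; it simply cites it as the classical Siegel--Walfisz theorem from Br\"udern's textbook. Your sketch is exactly the standard proof of that theorem (reduction to $\psi(x;q,a)$, character orthogonality, the truncated explicit formula, the de la Vall\'ee Poussin zero-free region, and Siegel's theorem with $\varepsilon=1/(2C)$ to kill the exceptional zero), so it is correct and matches the argument the cited reference would supply.
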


More generally, we have the following which can be established along the same lines. 

\begin{proposition} \label{SW}
Let $C>0$ be given. Then there exists a constant $D=D(C)>0$ such that for all $y\le x$, $q\le \mathcal{L}^C$ and $a$ with $(a,q)=1$, we have
$$
\pi(y;q,a)=\frac{1}{\varphi(q)}\int\limits_{2}^y \frac{dt}{\log t} +
O\left(xe^{-D\sqrt{\log x}}\right).
$$
\end{proposition}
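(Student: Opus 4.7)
The plan is to derive Proposition \ref{SW} from the preceding Siegel--Walfisz theorem (where the inequality is stated at $x$) by a simple case distinction on the size of $y$ relative to $x$. Since the hypothesis $q\le \mathcal{L}^C=(\log x)^C$ is weaker than $q\le(\log y)^C$ when $y$ is much smaller than $x$, the Siegel--Walfisz theorem cannot be applied directly at $y$ with the same constant $C$; the case split handles this.

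\medskip

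\textbf{Large $y$.} Suppose $y\ge x^{1/2}$. Then $\log x\le 2\log y$, so the hypothesis $q\le (\log x)^C$ gives $q\le 2^C(\log y)^C\le (\log y)^{C+1}$ once $x$ is large enough. Applying the Siegel--Walfisz theorem with the parameter $C+1$ at $y$ in place of $x$ produces a constant $D_1=D(C+1)>0$ and the estimate
$$
\pi(y;q,a)=\frac{1}{\varphi(q)}\int_2^y\frac{dt}{\log t}+O\!\left(y\,e^{-D_1\sqrt{\log y}}\right).
$$
Because $y\le x$ and $\sqrt{\log y}\ge\sqrt{(\log x)/2}$, the error is at most $O(x\,e^{-D\sqrt{\log x}})$ with $D=D_1/\sqrt{2}$.

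\medskip

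\textbf{Small $y$.} Suppose $y<x^{1/2}$. Then trivially $\pi(y;q,a)\le y<x^{1/2}$ and $\int_2^y dt/\log t\le y<x^{1/2}$, so the full difference is $O(x^{1/2})$. A direct comparison of exponents shows $x^{1/2}=e^{(\log x)/2}\le e^{\log x-D\sqrt{\log x}}$ as soon as $\sqrt{\log x}\ge 2D$, i.e., for $x$ large enough and any fixed $D>0$. Thus the error is again absorbed into $O(x\,e^{-D\sqrt{\log x}})$.

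\medskip

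Taking $D$ to be the minimum of the two constants obtained above (and adjusting the implicit $O$-constant to cover the finitely many small values of $x$) yields the proposition. The only obstacle is the one mentioned above, namely that Siegel--Walfisz is normally phrased with the modulus bounded by a power of the logarithm of the \emph{upper endpoint of summation}; the split at $y=x^{1/2}$ keeps $\log y$ within a constant factor of $\log x$ in the only regime where the non-trivial estimate is actually needed.
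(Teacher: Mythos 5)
Your argument is correct, and it is a legitimate, self-contained way to obtain Proposition~\ref{SW}. The paper itself offers no proof: it simply remarks that the uniform-in-$y$ version ``can be established along the same lines'' as the basic Siegel--Walfisz theorem, which ordinarily means re-examining the explicit-formula argument and noting that the error term $O(xe^{-D\sqrt{\log x}})$ is already uniform for $y\le x$ when $q\le(\log x)^C$. You instead deduce it formally from the statement of Siegel--Walfisz treated as a black box: splitting at $y=x^{1/2}$, in the small-$y$ regime a trivial bound $O(x^{1/2})$ is already smaller than $xe^{-D\sqrt{\log x}}$ for $x$ large, while in the large-$y$ regime the inequality $\log y\ge(\log x)/2$ lets you absorb the factor $2^C$ by bumping $C$ to $C+1$ and then transfer $e^{-D_1\sqrt{\log y}}$ to $e^{-(D_1/\sqrt{2})\sqrt{\log x}}$. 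Both routes reach the same conclusion; yours has the advantage of not requiring one to reopen the proof of Siegel--Walfisz, at the small cost of an extra case split and a marginally worse constant~$D$.
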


We shall apply Proposition \ref{SW} to generate primes in a residue class $d_p \bmod{p}$, where we assume that
\begin{equation} \label{SiegelWalfiszcond}
p \le \mathcal{L}^C
\end{equation}
for an arbitrary but fixed $C$.
Then we sieve for primes in a residue class $e_p \bmod{p^{N_p}}$ properly contained in it, i.e. with $N_p>1$ and $e_p\equiv d_p\bmod{p}$. Here we assume that $p^{N_p}\in (Q,2Q]$ and $(d_p,p)=1$ (and hence $(e_p,p^{N_p})=1$). For $y_p\le x$, we set  
$$
\mathcal{A}_p:=\left\{n\le y_p: n \equiv e_p \bmod{p^{N_p}}\right\},
$$
$$
\mathcal{B}_p:=\left\{n\le y_p: n \equiv d_p \bmod{p}\right\}.
$$
If $\mathcal{M}$ is a finite set of integers and $z\ge 1$, we use the notation
$$
S(\mathcal{M},z):=\sharp\{n\in \mathcal{M} : p|n \mbox{ prime } \Rightarrow p\ge z\},
$$
which is common in sieve theory. We note that
\begin{equation} \label{SpiA}
\pi(y_p;p^N,e_p)=S(\mathcal{A}_p,x^{1/2})+O\left(\frac{x^{1/2}}{p^{N_p}}+1\right)
\end{equation}
and 
\begin{equation} \label{SpiB}
\pi(y_p;p,d_p)=S(\mathcal{B}_p,x^{1/2})+O\left(\frac{x^{1/2}}{p}+1\right).
\end{equation}

For now, we fix $p$ and set $\mathcal{A}:=\mathcal{A}_p$ and $\mathcal{B}:=\mathcal{B}_p$.  
To deduce information on $S(\mathcal{A},x^{1/2})$ from information on $S(\mathcal{B},x^{1/2})$, we shall use Harman's sieve. Below is an asymptotic version of Harman's sieve (see \cite[Theorem 3.1.]{HarPri} with $R=1$). 
  
\begin{proposition}[Harman] \label{Harmanasymp} Suppose that for all sequences $(a_m)_{m\in \mathbb{N}}$ and $(b_n)_{n\in \mathbb{N}}$ with $|a_m|\le \tau(m)$ and $|b_n|\le \tau(n)$ {\rm (}here $\tau(n)$ denotes the number of divisors of $n${\rm )}, we have  
\begin{equation} \label{typeI}
\left| \sum\limits_{\substack{mn\in \mathcal{A}\\ m\le M}} a_m -  
\lambda\sum\limits_{\substack{mn\in \mathcal{B}\\ m\le M}} a_m\right| \le X
\end{equation}
and 
\begin{equation} \label{typeII}
\left|\sum\limits_{\substack{mn\in \mathcal{A}\\ x^{\alpha}< m\le x^{\alpha+\beta}}} a_mb_n- \lambda\sum\limits_{\substack{mn\in \mathcal{B}\\ x^{\alpha}< m\le x^{\alpha+\beta}}} a_mb_n\right| \le  X
\end{equation}
for some fixed $\lambda>0$, $0<\alpha<1$, $\beta\le 1/2$, $M>x^{\alpha}$ and $X\ge 1$. Then  
\begin{equation} \label{Sasymp}
\left|S\left(\mathcal{A},x^{\beta}\right)-\lambda S\left(\mathcal{B},x^{\beta}\right)\right|=O\left(X\mathcal{L}^3\right).
\end{equation}
\end{proposition}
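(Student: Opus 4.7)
The plan is to derive \eqref{Sasymp} by iterated Buchstab decomposition of both $S(\mathcal{A}, x^\beta)$ and $S(\mathcal{B}, x^\beta)$, packaging each resulting piece as either a type I difference controlled by \eqref{typeI} or a type II difference controlled by \eqref{typeII}. Setting $z = x^{\beta}$ and writing $\mathcal{M}_d = \{n : dn \in \mathcal{M}\}$, I iterate Buchstab's identity
$$S(\mathcal{M}, z) = |\mathcal{M}| - \sum_{p < z} S(\mathcal{M}_p, p)$$
inside each residual piece to obtain a finite alternating expansion
$$S(\mathcal{M}, z) = \sum_{k \ge 0} (-1)^{k} \sum_{z > p_1 > p_2 > \cdots > p_k} |\mathcal{M}_{p_1 p_2 \cdots p_k}|,$$
which terminates because each $p_i < x^\beta$ forces $p_1 \cdots p_k > x$ after at most $O(1/\beta)$ levels, at which point the inner summand vanishes.

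The key observation driving the stopping rule is that, since every extracted prime satisfies $p_i < x^{\beta}$, the first time the partial product $p_1 \cdots p_k$ exceeds $x^\alpha$ one automatically has
$$x^{\alpha} < p_1 \cdots p_k = (p_1 \cdots p_{k-1})\, p_k \le x^{\alpha} \cdot x^{\beta} = x^{\alpha+\beta},$$
which fits exactly into the type II range of \eqref{typeII}. I therefore stop extracting primes from a tuple as soon as its running product $m = p_1 \cdots p_k$ exceeds $x^\alpha$. This partitions the Buchstab tuples into \emph{incomplete} ones (all partial products $\le x^\alpha$, in particular $m \le x^\alpha < M$) and \emph{complete} ones ($m \in (x^\alpha, x^{\alpha+\beta}]$). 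All incomplete tuples are aggregated into a single type I sum with coefficient $a_m = \sum_{k,\, p_1 \cdots p_k = m}(-1)^k$; this counts signed factorizations of $m$ into distinct primes and satisfies $|a_m| \le \tau(m)$, so one application of \eqref{typeI} bounds this contribution by $X$. The complete tuples yield residual pieces $S(\mathcal{M}_m, p_k)$ which, after unfolding the sieve indicator, are bilinear sums $\sum_{mn\in \mathcal{M}} a_m b_n$ with $m \in (x^\alpha, x^{\alpha+\beta}]$ and $b_n$ the indicator that $n$ has no prime factor smaller than $p_k$.

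The principal obstacle is that in the complete case $b_n$ depends on $p_k$, the smallest prime factor of $m$, so the coefficients do not a priori separate into a pure product $a_m b_n$ as required by \eqref{typeII}. I handle this by dyadically decomposing the range of $p_k$ into $O(\mathcal{L})$ intervals $[P, 2P]$ and, on each, replacing the sifting level $p_k$ by $P$ at the cost of a further Buchstab-type error supported on $n$ having a prime factor in $[P, 2P]$, which is reabsorbed into the same inductive scheme. On each dyadic piece the coefficients genuinely separate with $|a_m|, |b_n| \le \tau$, so a single application of \eqref{typeII} contributes $O(X)$. Summing the $O(\mathcal{L})$ dyadic pieces, combining with the $O(1)$ outer Buchstab levels and absorbing additional logarithmic factors from the sieve bookkeeping yields the stated bound $O(X \mathcal{L}^{3})$.
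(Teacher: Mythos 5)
Your high-level strategy — iterated Buchstab with a stopping rule ``stop when the running product first exceeds $x^{\alpha}$'', type I for the incomplete tuples, type II for the completed ones with $m\in(x^{\alpha},x^{\alpha+\beta}]$ — is indeed the skeleton of Harman's argument. But the critical step, the decoupling of the sieving level $p_k$ (which depends on $m$) from the $n$-side coefficient $b_n$, is not resolved by the dyadic-plus-reabsorption mechanism you propose, and that is exactly where the real work in the proof lies.

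Concretely, after the completed tuples you face $S(\mathcal{M}_m,p_k)$ with $b_n=1_{\{p\mid n\Rightarrow p\ge p_k\}}$. If you localize $p_k\in[P,2P]$ and replace the sifting level by $P$, the Buchstab error is $\sum_{P\le q<p_k}S(\mathcal{M}_{mq},q)$. If you push $q$ to the $m$-side, the new product $mq$ can exceed $x^{\alpha+\beta}$ (it can be as large as $2P\cdot x^{\alpha+\beta}$), so it leaves the type~II range and cannot be ``reabsorbed into the same inductive scheme''. If instead you keep $q$ on the $n$-side, the resulting coefficient is supported on $n$ whose smallest prime lies in $[P,p_k)$, and the upper endpoint $p_k$ still depends on $m$; the coupling has not gone away. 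Dyadic localization alone therefore does not produce coefficients of the separated form $a_mb_n$ required by \eqref{typeII}. The tool the paper (following Harman) uses to sever these couplings is the ``cosmetic surgery'' Lemma~\ref{CoSu}: the indicator $1_{\{p_0<p_s\}}$ (and likewise the indicator $1_{\{ap_0d\ge M\}}$ in the paper's formulation) is written as a Fourier integral $\frac{1}{\pi}\int_{-T}^{T}e^{it\gamma}\frac{\sin(\rho t)}{t}\,dt + O(\cdot)$, which factors the constraint into a product of a function of $a$ and a function of $n$, at the cost of an integral over $t$ (and a second over $\tau$); a Cauchy--Schwarz in $t,\tau$ then reduces everything to a bounded number of applications of the type~II hypothesis, producing the $\mathcal{L}^3$ loss. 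Your sketch contains no substitute for this step, so it has a genuine gap.

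A smaller point: your termination claim ``forces $p_1\cdots p_k>x$ after at most $O(1/\beta)$ levels'' is wrong, since the extracted primes can be as small as $2$; the correct truncation depth is $O(\log x)$ (cf.\ \eqref{4.10}), which is one of the sources of logarithmic loss in the final bound. This is harmless for the conclusion but should be corrected.
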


The above sieve result is not quite sufficient for our purposes. We need an average version, where $p$ runs over a set $\mathcal{P}$ of primes less or equal $\mathcal{L}^C$ (and correspondingly, $p^{N_p}$ runs over a set $\mathcal{S}$). Looking at the proof of Harman's asymptotic sieve in \cite{Har}, we observe that it suffices to assume that the  type I and type II relations hold for certain {\it specific} sequences $(a_m)$ and $(b_n)$ which do not depend on the sets $\mathcal{A}$ and $\mathcal{B}$ but just on $x$. This allows us to extend Proposition \ref{Harmanasymp} to the following result whose proof is along the same lines with an additional use of the Cauchy-Schwarz inequality. 

\begin{proposition} \label{Harmanasymp1} Suppose that for all sequences $(a_m)_{m\in \mathbb{N}}$ and $(b_n)_{n\in \mathbb{N}}$ with $|a_m|\le \tau(m)$ and $|b_n|\le \tau(n)$, we have 
\begin{equation} \label{typeI1}
\sum\limits_{p\in \mathcal{P}} \left| \sum\limits_{\substack{mn\in \mathcal{A}_p\\ m\le M}} a_m -\lambda 
\sum\limits_{\substack{mn\in \mathcal{B}_p\\ m\le M}} a_m \right|^2 \le Y
\end{equation}
and
\begin{equation} \label{typeII1}
\sum\limits_{p\in \mathcal{P}} \left| \sum\limits_{\substack{mn\in \mathcal{A}_p\\ x^{\alpha}< m\le x^{\alpha+\beta}}} a_mb_n- \lambda\sum\limits_{\substack{mn\in \mathcal{B}_p\\ x^{\alpha}< m\le x^{\alpha+\beta}}} a_mb_n \right|^2 \le Y
\end{equation}
for some fixed $\lambda>0$, $0<\alpha<1$, $\beta\le 1/2$, $M>x^{\alpha}$ and $Y\ge 1$. Then 
\begin{equation} \label{Sasymp1}
\sum\limits_{p\in \mathcal{P}} \left| S\left(\mathcal{A}_p,x^{\beta}\right)-\lambda S\left(\mathcal{B}_p,x^{\beta}\right)\right|^2 = O\left(Y\mathcal{L}^6\right).
\end{equation}
\end{proposition}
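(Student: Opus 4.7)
The plan is to revisit the proof of Proposition \ref{Harmanasymp} and to exploit the observation that Harman's argument proceeds by iterating Buchstab's identity, together with dyadic splittings in the remaining variables, to rewrite
$$S(\mathcal{A},x^\beta) - \lambda S(\mathcal{B},x^\beta) = \sum_{i=1}^{K} D_i(\mathcal{A},\mathcal{B}),$$
where $K = O(\mathcal{L}^3)$ and each $D_i(\mathcal{A},\mathcal{B})$ is either a type I difference $\sum_{mn\in \mathcal{A},\, m\le M} a_m^{(i)} - \lambda \sum_{mn\in \mathcal{B},\, m\le M} a_m^{(i)}$ of the shape \eqref{typeI}, or a type II difference of the shape \eqref{typeII}. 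The sequences $a_m^{(i)}$ and $b_n^{(i)}$ obtained from the Buchstab iterations are products of divisor-like weights with indicator functions of dyadic intervals and of integers with prescribed smallest prime factors; in particular they obey the divisor bounds $|a_m^{(i)}| \le \tau(m)$ and $|b_n^{(i)}| \le \tau(n)$.

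The decisive observation, exactly as flagged by the authors, is that these coefficient sequences depend only on $x$ and on the fixed sieve parameters $\alpha, \beta, M, \lambda$, and not on $p$ or on the particular sets $\mathcal{A}_p, \mathcal{B}_p$. Hence the same decomposition can be applied simultaneously for every $p \in \mathcal{P}$, yielding
$$S(\mathcal{A}_p, x^\beta) - \lambda S(\mathcal{B}_p, x^\beta) = \sum_{i=1}^{K} D_i(\mathcal{A}_p, \mathcal{B}_p)$$
with the same coefficients $a_m^{(i)}, b_n^{(i)}$ across all $p \in \mathcal{P}$.

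Now I apply the Cauchy--Schwarz inequality in the form $|\sum_{i=1}^{K} z_i|^2 \le K \sum_{i=1}^{K} |z_i|^2$ to each $p$ and sum over $\mathcal{P}$:
$$\sum_{p \in \mathcal{P}} \left| S(\mathcal{A}_p, x^\beta) - \lambda S(\mathcal{B}_p, x^\beta) \right|^2 \le K \sum_{i=1}^{K} \sum_{p \in \mathcal{P}} \left| D_i(\mathcal{A}_p, \mathcal{B}_p) \right|^2.$$
For each $i$, the inner sum is bounded by $Y$ via the averaged type I hypothesis \eqref{typeI1} or the averaged type II hypothesis \eqref{typeII1}, applied to the fixed sequences $a_m^{(i)}, b_n^{(i)}$ (which are legitimate choices since \eqref{typeI1} and \eqref{typeII1} quantify over all sequences satisfying the divisor bound). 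The right-hand side is therefore at most $K^2 Y = O(Y \mathcal{L}^6)$, which is exactly \eqref{Sasymp1}.

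The principal task is book-keeping rather than the introduction of new analytic input: one must trace through the proof of Proposition \ref{Harmanasymp} as given in \cite{Har} (or \cite[Theorem 3.1]{HarPri} with $R=1$) to verify that the explicit Buchstab decomposition produces coefficient sequences uniform in $p$, that each resulting sum is either of type I (with $m \le M$) or of type II (with $x^{\alpha} < m \le x^{\alpha+\beta}$), and that the total number of pieces is $K = O(\mathcal{L}^3)$. Once these structural properties are confirmed, Cauchy--Schwarz upgrades the pointwise loss $\mathcal{L}^3$ of Proposition \ref{Harmanasymp} to the mean-square loss $\mathcal{L}^6$ of Proposition \ref{Harmanasymp1}, as expected.
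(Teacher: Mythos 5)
Your high-level strategy is the same as the paper's (exploit that the sieve coefficients are $p$-independent, then use Cauchy--Schwarz to upgrade the pointwise $\mathcal{L}^3$ loss of Proposition \ref{Harmanasymp} to a mean-square $\mathcal{L}^6$ loss), but there is a genuine gap in the way you organize the decomposition. You assert that iterating Buchstab (with dyadic splittings) rewrites $S(\mathcal{A},x^\beta)-\lambda S(\mathcal{B},x^\beta)$ as a \emph{finite discrete} sum of $K=O(\mathcal{L}^3)$ terms, each \emph{already} of type I or type II shape. That is not what the Buchstab iteration produces. After grouping by the largest prime factors one gets $O(\mathcal{L})$ subsums $S_p^{\flat}(s)$, and these split further into a term $S_p^{\flat,1}(s)$ which \emph{is} directly of type II shape, and a term $S_p^{\flat,2}(s)$ in which the two bilinear ranges are \emph{coupled}: it carries the side conditions $ap_0d\ge M$ (linking the ``$m$''-variable $a$ with the ``$n$''-variable $p_0d\cdot b$) and $p_0<p_s$ (linking a prime factor of $n$ with a prime factor of $a$). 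Because of this coupling, $S_p^{\flat,2}(s)$ cannot simply be written as $\sum_{a,n}c_a b_n(\omega_p(an)-\tilde\omega_p(an))$ with coefficients $c_a,b_n$ supported in a product of intervals, so the type II hypothesis \eqref{typeII1} does not apply to it directly, and the term is not one of your ``pieces''.

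The paper's remedy is the ``cosmetic surgery'' Lemma \ref{CoSu}, applied twice, which replaces the two coupling conditions by Fourier integrals in auxiliary parameters $t,\tau$ and produces decoupled coefficients such as $c_a(t,\tau)=\sin(tp_s)\,a^{i\tau}\sin(\tau\log(M+\rho))$ and $b_n(t,\tau)$ built from $e^{itp_0}\mu(d)(p_0d)^{i\tau}$. These satisfy $|c_a^{\ast}|\le 1$, $|b_n|\le\tau(n)$ (so \eqref{typeII1} applies pointwise in $(t,\tau)$, precisely because the hypothesis quantifies over all admissible coefficient sequences), but they are genuinely complex and $t$-, $\tau$-dependent, not indicator functions of dyadic intervals. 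One then needs Cauchy--Schwarz for \emph{integrals} as well as for the discrete sum over $s$. Tracking the losses: the discrete CS over $s\le t\ll\mathcal{L}$ (and $j=1,2$) costs $\mathcal{L}^2$, while the integral CS over the double integral in $(t,\tau)$ with logarithmic weights costs $\mathcal{L}^4$; the product is $\mathcal{L}^6$. Numerically this matches your $K^2=(\mathcal{L}^3)^2$, but the mechanism is a mix of a discrete $O(\mathcal{L})$-term sum and two Fourier averages, not a $K=\mathcal{L}^3$-piece partition. Without Lemma \ref{CoSu} there is no way to feed the coupled term $S_p^{\flat,2}(s)$ into \eqref{typeII1}, so this step is not ``book-keeping'' but the central analytic device of the proof and must appear explicitly.
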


We shall supply a complete proof of Proposition \ref{Harmanasymp1} in the appendix.  Following usual custom, we call the bilinear sums in \eqref{typeI1} type I sums
and the bilinear sums in \eqref{typeII1} type II sums. We shall obtain a satisfactory type I estimate by elementary means and a satisfactory type II estimate by applying a dispersion argument and then using the large sieve after detecting the implicit congruence relations using Dirichlet characters. Below is a version of the large sieve (see \cite[Satz 5.5.1.]{Bru}, for example).

\begin{proposition}[Large Sieve] \label{ls} Let $Q$ and $N$ be positive integers and $M$ be an integer. Then, we have
$$
\sum\limits_{q\le Q} \frac{q}{\varphi(q)} \sideset{}{^\ast}\sum\limits_{\chi \bmod q} \left| \sum\limits_{M<n\le M+N} a_n\chi(n)\right|^2 \le \left(Q^2+N-1\right) \sum\limits_{M<n\le M+N} |a_n|^2,
$$
where the asterisk indicates that the sum is restricted to primitive characters. 
\end{proposition}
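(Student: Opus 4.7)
The plan is to reduce the claim to the classical \emph{additive} large sieve
$$
\sum_{q\le Q} \sum_{\substack{h\bmod q\\ (h,q)=1}} \biggl| \sum_{M<n\le M+N} a_n\, e^{2\pi i hn/q} \biggr|^{2} \le (Q^{2}+N-1) \sum_{M<n\le M+N} |a_n|^{2},
$$
which I would take as a known input. The additive inequality is itself proved by a duality ($TT^{\ast}$) argument combined with the $\gg 1/Q^{2}$ spacing of the Farey fractions $h/q$ with $q\le Q$ on $\mathbb{R}/\mathbb{Z}$, for instance via Selberg's majorant or the Montgomery--Vaughan method. This is where all the genuine analytic content sits, and it is the main obstacle; everything else is algebraic.

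The bridge from the additive to the multiplicative form is provided by Gauss sums. For a primitive character $\chi$ modulo $q$ one has the identity
$$
\chi(n)\,\tau(\bar\chi)=\sum_{h\bmod q}\bar\chi(h)\, e^{2\pi i hn/q}\qquad\text{for every integer }n,
$$
where $\tau(\bar\chi):=\sum_{h\bmod q}\bar\chi(h)e^{2\pi i h/q}$ satisfies $|\tau(\bar\chi)|^{2}=q$; both sides of the identity vanish when $(n,q)>1$, so no coprimality restriction on the support of $(a_n)$ is needed. Multiplying by $a_n$, summing over $M<n\le M+N$, and setting $S_q(h):=\sum_{M<n\le M+N}a_n\, e^{2\pi i hn/q}$ gives
$$
\tau(\bar\chi)\sum_{n} a_n\chi(n)=\sum_{h\bmod q}\bar\chi(h)\, S_q(h).
$$

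To conclude, I would take squared moduli, sum over primitive $\chi\bmod q$, enlarge the sum to \emph{all} characters mod $q$ (which only increases it), and invoke the orthogonality relation
$$
\sum_{\chi\bmod q}\chi(h_{1})\bar\chi(h_{2})=\begin{cases}\varphi(q)&\text{if }h_{1}\equiv h_{2}\bmod q\text{ and }(h_{1}h_{2},q)=1,\\0&\text{otherwise.}\end{cases}
$$
Using $|\tau(\bar\chi)|^{2}=q$ this collapses the double sum over $h_{1},h_{2}$ to a single diagonal sum and yields
$$
\frac{q}{\varphi(q)}\sum_{\chi\bmod q}^{\ast}\Bigl|\sum_{n}a_n\chi(n)\Bigr|^{2}\le \sum_{\substack{h\bmod q\\ (h,q)=1}}|S_q(h)|^{2}.
$$
Summing over $q\le Q$ and inserting the additive large sieve on the right-hand side closes the argument. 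The only subtle point to verify along the way is the extension of the Gauss-sum identity to $n$ with $(n,q)>1$, which one handles by noting that primitivity of $\chi$ forces $\chi(n)=0$ there and that the right-hand side also vanishes by the definition of primitivity (a direct consequence of induction from a proper divisor of $q$).
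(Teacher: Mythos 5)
Your derivation is correct and is the standard route: reduce the multiplicative large sieve to the additive large sieve by expanding $\chi(n)$ via the Gauss-sum identity for primitive characters, applying orthogonality over all characters mod $q$ (dropping imprimitive ones only increases the left side), and then invoking the Farey-spacing bound $(Q^2+N-1)$. The paper itself supplies no proof of Proposition \ref{ls}; it cites the statement from Br\"udern's textbook, where essentially this same Gauss-sum reduction is carried out, so your argument matches the intended source. Your remark on extending the Gauss-sum identity to $(n,q)>1$ is exactly the point that needs primitivity, and you handle it correctly.
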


Another technical devise which we shall use to make ranges of variables independent is the following approximate version of Perron's formula (see \cite[Lemma 1.4.2.]{Bru}). 

\begin{proposition}[Perron's formula] \label{Perron} Let $c>0$, $Z\ge 2$ and $T\ge 2$. Let $(c_n)_{n\in \mathbb{N}}$ be a sequence of complex numbers and assume that the corresponding Dirichlet series $\sum\limits_{n=1}^{\infty} c_nn^{-s}$ converges absolutely for $s=c$. Then
$$
\sum\limits_{n\le Z} c_n=\frac{1}{2\pi i} \int\limits_{c-iT}^{c+iT} \left(\sum\limits_{n=1}^{\infty} c_nn^{-s}\right) Z^s \frac{ds}{s}+
O\left(\frac{Z^c}{T} \sum\limits_{n=1}^{\infty} |c_n|n^{-c} + C_Z\left(1+\frac{Z\log Z}{T}\right)\right),
$$
where 
$$
C_Z:=\max\limits_{3Z/4\le n\le 5Z/4} |c_n|. 
$$
\end{proposition}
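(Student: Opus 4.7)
The strategy is the standard one: realize the sharp cut-off $n \le Z$ by the truncated Mellin--Barnes kernel $y^s/s$ evaluated at $y = Z/n$, interchange with the sum defining the Dirichlet series, and then estimate the resulting error term by term according to how close $n$ is to $Z$.

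First I would establish the discontinuous integral estimate: for real $y > 0$ with $y \ne 1$,
$$
\left| \frac{1}{2\pi i} \int_{c-iT}^{c+iT} y^s \frac{ds}{s} - \mathbf{1}_{y > 1} \right| \ll \min\left(1,\ \frac{y^c}{T |\log y|}\right),
$$
proved by closing the contour to the left when $y > 1$ (picking up the residue $1$ at $s = 0$) and to the right when $y < 1$, bounding the contribution of the horizontal segments by $y^c/(T|\log y|)$ and the far vertical side by $0$ in the limit. For $y = 1$ a direct estimate gives $1/2 + O(1/T)$. By absolute convergence of $\sum c_n n^{-c}$, I can swap sum and integral:
$$
\frac{1}{2\pi i} \int_{c-iT}^{c+iT} \Bigl(\sum_n c_n n^{-s}\Bigr) Z^s \frac{ds}{s} = \sum_n c_n \cdot \frac{1}{2\pi i} \int_{c-iT}^{c+iT} (Z/n)^s \frac{ds}{s}.
$$

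Applying the discontinuous integral to each term with $y = Z/n$ produces $\sum_{n < Z} c_n$ plus an error, and the difference with $\sum_{n \le Z} c_n$ is the boundary term at $n = Z$, which is $O(C_Z)$. The error is split into two ranges. For $n \le 3Z/4$ or $n \ge 5Z/4$, $|\log(Z/n)|$ is bounded below by an absolute constant, so the combined contribution is
$$
\ll \frac{1}{T} \sum_n |c_n| (Z/n)^c = \frac{Z^c}{T} \sum_n \frac{|c_n|}{n^c}.
$$
For $3Z/4 < n < 5Z/4$ with $n \ne Z$ I use $|c_n| \le C_Z$ together with $|\log(Z/n)| \gg |Z-n|/Z$, obtaining
$$
C_Z \sum_{\substack{3Z/4 < n < 5Z/4 \\ n \ne Z}} \frac{Z}{T|Z-n|} \ll C_Z \cdot \frac{Z \log Z}{T}.
$$
Combining the three contributions yields exactly the stated error $Z^c T^{-1} \sum |c_n| n^{-c} + C_Z(1 + Z \log Z / T)$.

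The routine part is the contour shift and the bound on horizontal segments of the rectangle; the genuine technical point is the near-diagonal regime $|n - Z| \ll Z$, where $|\log(Z/n)|$ can be arbitrarily small and the pointwise bound $\min(1, y^c/(T|\log y|))$ must be combined with $|c_n| \le C_Z$ and a harmonic sum to avoid losing more than a factor of $\log Z$. Everything else reduces to absolute convergence of the defining Dirichlet series at $\sigma = c$, which is given by hypothesis.
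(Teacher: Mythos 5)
The paper does not prove this proposition; it is quoted directly from Br\"udern's book (\cite[Lemma 1.4.2]{Bru}), so there is no internal proof to compare against. Your argument is the standard proof of the truncated Perron formula and is correct in its essential structure: the truncated discontinuous integral kernel, the interchange of sum and integral justified by absolute convergence on $\mathrm{Re}(s)=c$, and the split of the error into the far range (where $|\log(Z/n)|\gg 1$) and the near-diagonal range $3Z/4<n<5Z/4$ (where $|c_n|\le C_Z$ and the harmonic sum gives the $C_Z Z\log Z/T$ term, with the $\min(1,\cdot)$ absorbing the at most two integers with $|Z-n|<1$ into the $O(C_Z)$ term).

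One small imprecision worth flagging: the correct form of the truncated kernel bound is $y^c\min\bigl(1,\,1/(T|\log y|)\bigr)$, not $\min\bigl(1,\,y^c/(T|\log y|)\bigr)$; the two differ when $y>1$ is large and $T$ is moderate, where the honest bound from the arc contour is $y^c$, not $1$. This does not affect your proof, because you invoke the ``$\min$ with $1$'' branch only in the near-diagonal range, where $y=Z/n\asymp 1$ and hence $y^c\asymp 1$, so the two bounds agree up to constants; and in the far range you use only the $y^c/(T|\log y|)$ branch. Stating the kernel bound in the standard form would make this transparent.
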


\section{Proof of Theorem \ref{mainresult}}
To prove Theorem \ref{mainresult}, we apply Proposition \ref{Harmanasymp1} with $\beta=1/2$ and $M=x^{\alpha}+1$, where the parameter $\alpha$ will later be optimized. We need to establish \eqref{typeI1} and \eqref{typeII1} with 
\begin{equation} \label{choice}
\lambda:=\frac{1}{p^{N-1}} \quad \mbox{and} \quad Y:=\frac{x^2\mathcal{L}^{10}}{Q^2},
\end{equation}
where we recall that $p^{N_p}\in (Q,2Q]$ for all $p\in \mathcal{P}$. 
Then \eqref{Sasymp1} together with \eqref{SpiA}, \eqref{SpiB} and Proposition \ref{SW} implies 
\begin{equation*}
\sum\limits_{p\in \mathcal{P}} \left|F(y_p;p^{N_p},e_p)\right|^2= O\left(\frac{x^2\mathcal{L}^{16}}{Q^2}\right) 
\end{equation*}
for $Q$ as in Theorem \ref{mainresult}. Hence, the number of primes $p$ in $\mathcal{P}$ for which
$$
|F(y_p;p^{N_p},e_p)|> \frac{x}{Q\mathcal{L}^A}
$$ 
is bounded by $O(\mathcal{L}^{16+2A})$. The statement of Theorem \ref{mainresult} follows upon choosing $e_p$ and $y_p$ in such a way that
$$
|F(y_p;p^{N_p},e_p)|=F(x,p^{N_p}).
$$ 
  
\subsection{Treatment of type I sums}
In the following, we suppress the indices $p$ at $\mathcal{A}_p$, $\mathcal{B}_p$, $d_p$, $e_p$, $y_p$ and $N_p$ for simplicity. 
The difference of double sums over $m$ and $n$ in \eqref{typeI1} then equals
\begin{equation*} 
\Sigma_{I}=\sum\limits_{\substack{m\le M\\ (m,p)=1}} a_m\left(\sum\limits_{\substack{n\\ mn\in A}} 1 - \lambda \sum\limits_{\substack{n\\ mn\in B}} 1 \right)
\end{equation*}
which in our setting (recall the choice of $\lambda$ in \eqref{choice}) takes the form
\begin{equation*}
\Sigma_{I}=\sum\limits_{\substack{m\le M\\ (m,p)=1}} a_m\left(\sum\limits_{\substack{n\le y/m\\ n \equiv e\overline{m} \bmod{p^N}}} 1 - \frac{1}{p^{N-1}} \sum\limits_{\substack{n\le y/m\\ n \equiv d\overline{m} \bmod{p}}} 1 \right), 
\end{equation*}
where $\overline{m}$ denotes a multiplicative inverse of $m$ modulo $p^N$. We observe that the difference contained in the sum on the right-hand side above is $O(1)$, and hence we have 
$$
\Sigma_{I}\ll  \sum\limits_{m\le M} a_m \ll \sum\limits_{m\le M} \tau(m) \ll M\mathcal{L}, 
$$
which gives a bound of $O(M^2\mathcal{L}^2\sharp\mathcal{P})$ for the left hand side of \eqref{typeI1}.  Hence, \eqref{typeI1} holds with $Y$ as defined in \eqref{choice} if
\begin{equation*}
M\ll \frac{x\mathcal{L}^{4}}{Q(\sharp \mathcal{P})^{1/2}}. 
\end{equation*}
By the prime number theorem we have 
\begin{equation} \label{Pbound}
\sharp\mathcal{P} \ll \frac{\mathcal{L}^C}{\log \mathcal{L}}\ll x^{\varepsilon}.
\end{equation}
Hence, recalling our choice $M=x^{\alpha}+1$, we conclude that \eqref{typeI1} holds under the condition 
\begin{equation} \label{typeIcond}
\boxed{Q\le x^{1-\alpha-\varepsilon}.}
\end{equation}

{\bf Remark:} Here we see why our method is limited to prime power moduli: Assume we take general moduli $q_1$ in place of $p$ and $q_2$ in place of $p^N$, where $q_1|q_2$. Assume further that $(d,q_1)=1$, $(e,q_2)=1$ and $e \equiv d \bmod{q_1}$. Then for the above cancellation argument to work, the difference between the double sums in \eqref{typeI1} needs to be replaced by
$$
\sum\limits_{\substack{m\le M\\ n\le y/m\\ mn\equiv e \bmod{q_2}\\}} a_m- \frac{q_1}{q_2} 
\sum\limits_{\substack{m\le M\\ n\le y/m\\ mn\equiv d \bmod{q_1}\\ }} a_m 
$$
which equals
$$
\sum\limits_{\substack{m\le M\\ (m,q_2)=1}} a_m \left(\sum\limits_{\substack{n\le y/m\\ n \equiv e\overline{m} \bmod{q_2}}} 1 - \frac{q_1}{q_2} \sum\limits_{\substack{n\le y/m\\ n \equiv d\overline{m} \bmod{q_1}}} 1 \right).
$$
However, in general, the factor $q_1/q_2$ does not match with the factor $\lambda=\varphi(q_1)/\varphi(q_2)$ which we need in \eqref{Sasymp1} since we expect
$$
\pi(y;q_2,e)\sim \frac{\varphi(q_1)}{\varphi(q_2)}\cdot \pi(y;q_1,d).
$$  
For $q_1=p$ and $q_2=p^N$, though, both factors above are equal. A similar issue comes up in the treatment of the type II sums. 

In section 2, we mentioned that an extension to power-full moduli with radical less or equal $\mathcal{L}^C$ should be possible. Indeed, if $q_2$ is power-full, i.e. all exponents in its prime factorisation are greater than 1, and $q_1$ is the radical of $q_2$, then we have $q_1/q_2=\varphi(q_1)/\varphi(q_2)$. 

\subsection{Treatment of type II sums.}
Suppressing again the index $p$, the difference of double sums in \eqref{typeII1} equals  
\begin{equation} \label{introSigma} 
\Sigma_{II}:=\sum\limits_{\substack{x^{\alpha}<m\le x^{\alpha+\beta}}} a_m \left(\sum\limits_{\substack{n\\ mn\in \mathcal{A}}} b_n -  \lambda\sum\limits_{\substack{n\\ mn\in \mathcal{B}}} b_n\right).
\end{equation}
In our setting,
\begin{equation} \label{Sigmadef} 
\Sigma_{II}=\sum\limits_{\substack{x^{\alpha}<m\le x^{\alpha+\beta}\\ (m,p)=1}} a_m \left(\sum\limits_{\substack{n\le y/m\\ n\equiv e\overline{m} \bmod{p^N}}} b_n -  \frac{1}{p^{N-1}}\sum\limits_{\substack{n\le y/m\\ n\equiv d\overline{m} \bmod{p}}} b_n\right).
\end{equation}
We split $\Sigma_{II}$ into $O(\mathcal{L})$ sub-sums of the form
\begin{equation} \label{aftersplit}
\Sigma(K):=\sum\limits_{\substack{K<m\le K'\\ (m,p)=1}} a_m \left(\sum\limits_{\substack{n\le y/m\\ n\equiv e\overline{m} \bmod{p^N}}} b_n -  \frac{1}{p^{N-1}}\sum\limits_{\substack{n\le y/m\\ n\equiv d\overline{m} \bmod{p}}} b_n\right)
\end{equation}
with $x^{\alpha}\le K<K'\le 2K\le x^{\alpha+\beta}$. Throughout the following, let
$$
L:=\frac{x}{K}.
$$
To disentangle the summation variables, we apply Perron's formula, Proposition \ref{Perron}, with
$$
Z:=\frac{y}{m}, \quad c:=\frac{1}{\log L}, \quad T:=L\log L
$$
and 
$$
c_n:= \begin{cases} b_n & \mbox{ if } n\le L \mbox{ and } n \equiv e \overline{m} \bmod{p^N} 
\mbox{ (or } n \equiv d\overline{m} \bmod{p})\\ 0 & \mbox{ otherwise} \end{cases} 
$$ 
to the inner sums over $n$ on the right-hand side of \eqref{aftersplit}. This gives
\begin{equation*}
\begin{split} 
& \sum\limits_{\substack{n\le y/m\\ n\equiv e\overline{m} \bmod{p^N}}} b_n -  \frac{1}{p^{N-1}}\sum\limits_{\substack{n\le y/m\\ n\equiv d\overline{m} \bmod{p}}} b_n\\
= & \frac{1}{2\pi i} \int\limits_{c-iT}^{c+iT} \left(\sum\limits_{\substack{n\le L\\ n\equiv e\overline{m} \bmod{p^N}}} b_n n^{-s} -  \frac{1}{p^{N-1}}\sum\limits_{\substack{n\le L\\ n\equiv d\overline{m} \bmod{p}}} b_n n^{-s} \right) \left(\frac{y}{m}\right)^{s} \frac{ds}{s} + O\left(x^{\varepsilon}\right).
\end{split}
\end{equation*}
Hence, we have
\begin{equation*} 
\Sigma(K)=\frac{1}{2\pi i} \int\limits_{c-iT}^{c+iT} \Sigma(K,s)\frac{ds}{s} + O(Kx^{\varepsilon}),
\end{equation*}
where 
\begin{equation*} 
\Sigma(K,s):=\sum\limits_{\substack{K<m\le K'\\ (m,p)=1}} a_m(s) \left(\sum\limits_{\substack{n\le L\\ n\equiv e\overline{m} \bmod{p^N}}} b_n(s) -  \frac{1}{p^{N-1}}\sum\limits_{\substack{n\le L\\ n\equiv d\overline{m} \bmod{p}}} b_n(s)\right)
\end{equation*}
with 
$$
a_m(s):=a_m\left(\frac{y}{m}\right)^s, \quad b_n(s):=b_nn^{-s}.
$$
We note that $|a_m(s)|\ll |a_m|\le \tau(m)$ if $K< m\le K'$ and $|b_n(s)|\ll |b_n|\le \tau(n)$ if $n\le L$. By the Cauchy-Schwarz inequality for integrals, we deduce that
\begin{equation} \label{aftercauchy1}
\begin{split}
|\Sigma(K)|^2 \ll & \left(\int\limits_{-T}^{T} \frac{dt}{|c+it|}\right) \cdot \int\limits_{-T}^T \frac{|\Sigma(K,c+it)|^2}{|c+it|}dt + K^2x^{2\varepsilon} \\
\ll &
\mathcal{L} \cdot \int\limits_{-T}^T \frac{|\Sigma(K,c+it)|^2}{|c+it|}dt + K^2x^{2\varepsilon}. 
\end{split}
\end{equation}
Set $s=c+it$. An application of the Cauchy-Schwarz inequality for sums gives
\begin{equation} \label{CS} 
|\Sigma(K,s)|^2\le \left(\sum\limits_{\substack{K<m\le K'\\ (m,p)=1}} |a_m(s)|^2\right) \cdot \Sigma'(K,s)
\le K\mathcal{L}^3 \cdot \Sigma'(K,s),
\end{equation}
where 
\begin{equation*}
\Sigma'(K,s):=\sum\limits_{\substack{K<m\le 2K\\ (m,p)=1}} \left|\sum\limits_{\substack{n\le L\\ n\equiv e\overline{m} \bmod{p^N}}} b_n(s) -  \frac{1}{p^{N-1}}\sum\limits_{\substack{n\le L\\ n\equiv d\overline{m} \bmod{p}}} b_n(s)\right|^2.
\end{equation*}

Now we use a dispersion argument. We multiply out the modulus square and re-arrange summations to get
\begin{equation*}
\Sigma'(K,s)=\Sigma_1(K,s)-\Sigma_2(K,s)-\Sigma_3(K,s)+\Sigma_4(K,s),
\end{equation*}
where
$$
\Sigma_1(K,s):=\sum\limits_{\substack{n_1,n_2\le L\\ n_1 \equiv n_2 \bmod{p^N}\\ (n_1n_2,p)=1}} b_{n_1}(s)\overline{b_{n_2}(s)} 
\sum\limits_{\substack{K<m\le 2K\\ m \equiv e\overline{n_1} \bmod{p^N}}} 1, 
$$
$$
\Sigma_2(K,s):=\frac{1}{p^{N-1}} \sum\limits_{\substack{n_1,n_2\le L\\ n_1 \equiv n_2 \bmod{p}\\ (n_1n_2,p)=1}} b_{n_1}(s)\overline{b_{n_2}(s)} 
\sum\limits_{\substack{K<m\le 2K\\ m \equiv e\overline{n_1} \bmod{p^N}}} 1,
$$
$$
\Sigma_3(K,s):=\frac{1}{p^{N-1}} \sum\limits_{\substack{n_1,n_2\le L\\ n_1 \equiv n_2 \bmod{p}\\ (n_1n_2,p)=1}} b_{n_1}(s)\overline{b_{n_2}(s)} 
\sum\limits_{\substack{K<m\le 2K\\ m \equiv e\overline{n_2} \bmod{p^N}}} 1,
$$
and 
$$
\Sigma_4(K,s):=\frac{1}{p^{2N-2}} \sum\limits_{\substack{n_1,n_2\le L\\ n_1 \equiv n_2 \bmod{p}\\ (n_1n_2,p)=1}} b_{n_1}(s)\overline{b_{n_2}(s)} 
\sum\limits_{\substack{K<m\le 2K\\ m \equiv d\overline{n_1} \bmod{p}}} 1.
$$
We see immediately that
$$
\Sigma_1(K,s)=\sum\limits_{\substack{n_1,n_2\le L\\ n_1 \equiv n_2 \bmod{p^N}\\ (n_1n_2,p)=1}} b_{n_1}(s)\overline{b_{n_2}(s)} \left(\frac{K}{p^N}+O(1)\right),
$$
$$
\Sigma_2(K,s)=\frac{1}{p^{N-1}} \sum\limits_{\substack{n_1,n_2\le L\\ n_1 \equiv n_2 \bmod{p}\\ (n_1n_2,p)=1}} b_{n_1}(s)\overline{b_{n_2}(s)} \left(\frac{K}{p^N}+O(1)\right),
$$
$$
\Sigma_3(K,s)=\frac{1}{p^{N-1}} \sum\limits_{\substack{n_1,n_2\le L\\ n_1 \equiv n_2 \bmod{p}\\ (n_1n_2,p)=1}} b_{n_1}(s)\overline{b_{n_2}(s)} 
\left(\frac{K}{p^N}+O(1)\right),
$$
and 
$$
\Sigma_4(K,s)=\frac{1}{p^{2N-2}} \sum\limits_{\substack{n_1,n_2\le L\\ n_1 \equiv n_2 \bmod{p}\\ (n_1n_2,p)=1}} b_{n_1}(s)\overline{b_{n_2}(s)} 
\left(\frac{K}{p}+O(1)\right)
$$
so that 
\begin{equation*}
\begin{split}
\Sigma'(K,s)=& \frac{K}{p^{N}}\sum\limits_{\substack{n_1,n_2\le L\\ n_1 \equiv n_2 \bmod{p^N}\\ (n_1n_2,p)=1}} b_{n_1}(s)\overline{b_{n_2}(s)} -\frac{K}{p^{2N-1}}\sum\limits_{\substack{n_1,n_2\le L\\ n_1 \equiv n_2 \bmod{p}\\ (n_1n_2,p)=1}} b_{n_1}(s)\overline{b_{n_2}(s)}\\
&  +O\left(\frac{L^2x^{\varepsilon}}{p^N}\right),
\end{split}
\end{equation*}
provided that $p^N\ll L$ for all $L$ in question (i.e. for $L= x/K\ge x^{1-(\alpha+\beta)}$), which is the case if
\begin{equation}\label{somecond}
\boxed{
Q \le x^{1-(\alpha+\beta)}.}
\end{equation}
Here we recall that $p^N\in (Q,2Q]$. 

Now we use Dirichlet characters to detect the 
congruence relations in the above sums. We thus get 
\begin{equation*} \label{Sigma''}
\begin{split}
\Sigma'(K,s) = & \frac{K}{p^{N}} \cdot \frac{1}{\varphi(p^N)}\sum\limits_{\chi \bmod p^N} \sum\limits_{n_1,n_2\le L} b_{n_1}(s)\overline{b_{n_2}(s)} \chi(n_1)\overline{\chi}(n_2)-\\ 
&
\frac{K}{p^{2N-1}}\cdot \frac{1}{\varphi(p)} \sum\limits_{\chi' \bmod{p}}\ \sum\limits_{n_1,n_2\le L} b_{n_1}(s)\overline{b_{n_2}(s)} \chi'(n_1)\overline{\chi'}(n_2)+O\left(\frac{L^2x^{\varepsilon}}{p^N}\right).
\end{split}
\end{equation*}
Note that 
$$
\frac{K}{p^N}\cdot \frac{1}{\varphi(p^N)}=\frac{K}{p^{2N-1}}\cdot \frac{1}{\varphi(p)}=\frac{K}{\varphi(p^{2N})}
$$
and hence 
\begin{equation*}
\begin{split}
\Sigma'(K,s) = & \frac{K}{\varphi(p^{2N})} \sum\limits_{\chi \in \mathcal{X}(p^N)} \sum\limits_{n_1,n_2\le L} b_{n_1}(s)\overline{b_{n_2}(s)}\chi(n_1)\overline{\chi}(n_2) +O\left(\frac{L^2x^{\varepsilon}}{p^N}\right)\\
= & \frac{K}{\varphi(p^{2N})}\sum\limits_{\chi\in \mathcal{X}(p^N)} \left|\sum\limits_{n\le L} b_n(s) \chi(n)
\right|^2+ O\left(\frac{L^2x^{\varepsilon}}{p^N}\right),
\end{split}
\end{equation*}
where $\mathcal{X}(p^N)$ is the set of all Dirichlet characters modulo $p^N$ which are not induced by a Dirichlet character modulo $p$ (in particular, $\mathcal{X}(p^N)$ does not contain the principal character). We may write the above as
\begin{equation} \label{Sigma''}
\Sigma'(K,s) = \frac{K}{\varphi(p^{2N})} \sum\limits_{h=2}^N\ \sideset{}{^{\ast}} \sum\limits_{\chi \bmod{p^h}} \left|  \sum\limits_{n\le L} b_n(s) \chi(n)
\right|^2+ O\left(\frac{L^2x^{\varepsilon}}{p^N}\right),
\end{equation}
where the asterisk indicates that $\chi$ ranges over all {\it primitive} characters modulo $p^h$. 

Now we apply the large sieve, Proposition \ref{ls}, after re-introducing the indices $p$ and summing over $p\in \mathcal{P}$. This gives us
\begin{equation} \label{Largesieve}
\begin{split}
\sum\limits_{p\in \mathcal{P}} \sum\limits_{h=2}^{N_p} \frac{p^h}{\varphi(p^h)}\ \sideset{}{^{\ast}} \sum\limits_{\chi \bmod{p^h}}\left| \sideset{}{^{\ast}}\sum\limits_{n\le L} b_n(s) \chi(n)
\right|^2 \ll & \left(Q^2+L\right)\sum\limits_{n\le L}
|b_n|^2\\
 \ll & \left(Q^2+L\right)L\mathcal{L}^3,
\end{split}
\end{equation}
where we recall that $p^{N_p}\in (Q,2Q]$. 
Noting that 
$$
\frac{K}{\varphi(p^{2N})}=\frac{K}{p^{2N}}\cdot \frac{p^h}{\varphi(p^h)}
$$ 
for $h\ge 2$, and recalling that $KL=x$, we deduce that
$$
\sum\limits_{p\in \mathcal{P}} |\Sigma_p'(K,s)|\ll Q^{-2}K^{-1}x^2\mathcal{L}^3 + x\mathcal{L}^3+Q^{-1}K^{-2}x^{2+\varepsilon}\sharp\mathcal{P}
$$
after re-introducing the index $p$. 
Combining this with \eqref{Pbound}, \eqref{aftercauchy1} and \eqref{CS}, we obtain
\begin{equation} \label{bound1}
\sum\limits_{p\in \mathcal{P}} |\Sigma_p(K)|^2 \ll Q^{-2}x^2\mathcal{L}^8+Kx\mathcal{L}^8 +Q^{-1}K^{-1}x^{2+3\varepsilon} + K^2x^{3\varepsilon}.
\end{equation}
We get a second bound for the left-hand side by reversal of roles of the variables $m$ and $n$ in the above process, where $K$ on the right-hand side is replaced by $x/K$, i.e.
\begin{equation} \label{bound2}
\sum\limits_{p\in \mathcal{P}} |\Sigma_p(K)|^2 \ll Q^{-2}x^2\mathcal{L}^{8} + K^{-1}x^2\mathcal{L}^{8}+ Q^{-1}Kx^{1+3\varepsilon}+K^{-2}x^{2+3\varepsilon}. 
\end{equation}
For this to hold, the condition \eqref{somecond} needs to be replaced by
\begin{equation}\label{somecond2}
\boxed{
Q \le x^{\alpha}.}
\end{equation}
Using \eqref{bound1} if $K\le x^{1/2}$ and \eqref{bound2} if $K\ge x^{1/2}$, and recalling that 
$x^{\alpha}\le K\le x^{\alpha+\beta}$, we deduce that
\begin{equation*}
\sum\limits_{p\in \mathcal{P}} |\Sigma_p(K)|^2 \ll Q^{-2}x^2\mathcal{L}^{8} + x^{3/2}\mathcal{L}^{8}+ Q^{-1}x^{2-\alpha+3\varepsilon}\\
+Q^{-1}x^{1+\alpha+\beta+3\varepsilon}.
\end{equation*}
With the choice 
$$
\alpha=\frac{1}{4}\quad  \mbox{and} \quad  \beta=\frac{1}{2},
$$ 
and writing $\Sigma_p=\Sigma_{II}$ with $\Sigma_{II}$ as in \eqref{Sigmadef},
we therefore obtain
\begin{equation}
\begin{split}
\sum\limits_{p\in \mathcal{P}} |\Sigma_p|^2\ll & Q^{-2}x^2\mathcal{L}^{10} + x^{3/2}\mathcal{L}^{10} + Q^{-1}x^{7/4+4\varepsilon}
\end{split}
\end{equation}
using the Cauchy-Schwarz inequality again.  

The right-hand side is bounded by $Y$ defined in \eqref{choice} if 
\begin{equation} \label{lastcond}
\boxed{ Q\le x^{1/4-4\varepsilon}.}
\end{equation} 

\subsection{Completion of the proof} In view of our conditions \eqref{typeIcond}, \eqref{somecond}, \eqref{somecond2} and \eqref{lastcond}, Theorem \ref{mainresult} follows.   

\section{Connection to $p$-adic Diophantine approximation with primes $l$} \label{connection}
In this section, we describe the problem which actually motivated us originally.

Let $\theta\in \mathbb{R}$ be irrational. The problem of how $l\theta$ is distributed modulo one as $l$ runs over the primes has received a lot of attention. Currently, the best known result in this direction is due to Matom\"aki \cite{Mat} who proved that for any $\varepsilon>0$, there are infinitely many primes $l$ such that 
$$
||l\theta||\le l^{-1/3+\varepsilon},
$$
where $||.||$ denotes the distance to the closest integer. A brief description of the history of this problem is given in \cite[section 1]{BT}. In the context of quadratic number fields, this problem has been considered in \cite{HarQi}, \cite{BM} and \cite{BT}. 
It is natural to study this problem also in the setting of $p$-adic numbers. Fix $\alpha\in U$, where 
$$
U=\mathbb{Z}_p\setminus p\mathbb{Z}_p
$$ is the set of units in the ring of $p$-adic integers $\mathbb{Z}_p$. For $z\in \mathbb{Z}_p$ given by
$$
z=c_0+c_1p+c_2p^2+\cdots
$$
define $||z||_p$ as 
$$
||z||_p:=|c_1p+c_2p^2+\cdots|_p=p^{-n},
$$  
where $n$ is the smallest positive integer such that $c_n\not=0$. We aim to find $\tau>0$ such that there exist infinitely many rational primes $l$ such that
$$
||l\alpha||_p<l^{-\tau}.
$$
In the following, we relate this problem to primes in arithmetic progressions with prime power moduli. 

Let
$$
\alpha=a_0+a_1p+a_2p^2+... \quad (a_0\not=0, \ 0\le a_m<p)
$$
and write 
$$
l=b_0+b_1p+...+b_np^n \quad (l\not=p, \ 0\le b_m<p)
$$
in $p$-adic expansion.
Then 
$$
||l\alpha||_p< p^{-M} \quad (M\in \mathbb{N})
$$
if and only if the following congruences hold:
\begin{equation} \label{congru1}
\begin{split}
b_0a_1+b_1a_0\equiv & 0 \bmod{p}\\
d_1p^{-1}+b_0a_2+b_1a_1+b_2a_0 \equiv & 0 \bmod{p}\\
\vdots & \\
d_{M-1}p^{-(M-1)}+ b_0a_{M}+b_1a_{M-1}+\cdots +b_{M}a_0\equiv & 0 \bmod{p},
\end{split}
\end{equation}
where 
$$
d_n:=(a_0b_1+a_1b_0)+(a_0b_2+a_1b_1+a_2b_0)p+\cdots+(a_0b_{n}+a_1b_{n-1}+\cdots +a_{n}b_0)p^{n-1}.
$$
These are equivalent to
\begin{equation} \label{congru2}
\begin{split}
b_1\equiv & -b_0a_1\overline{a_0} \bmod{p}\\
b_2\equiv & - (d_1p^{-1}+b_0a_2+b_1a_1)\overline{a_0} \bmod{p}\\
\vdots & \\
b_{M} \equiv &- (d_{M-1}p^{-(M-1)}+b_0a_{M}+b_1a_{M-1}+\cdots +b_{M-1}a_1)\overline{a_0}\bmod{p},
\end{split}
\end{equation}
where $\overline{a_0}$ denotes a multiplicative inverse of $a_0$ modulo $p$. It is reasonable to fix $b_0$, i.e. to restrict $l$ from the beginning to a residue class $l\equiv b_0\bmod{p}$. Then $b_1,b_2,...,b_{M}$ are uniquely determined by the above congruences. 
In particular, if we assume that $a_0b_0\equiv 1 \bmod{p}$, then the above system reduces to the single congruence
$$
(a_0+a_1p+\cdots +a_Mp^M)(b_0+b_1p+\cdots +b_Mp^M)\equiv 1 \bmod{p^{M+1}}.
$$

Now our problem may be re-formulated as follows:
Find $\tau>0$ such that if $K$ is large enough, then there exists a rational prime $l\le p^K$ whose first $\lceil \tau K\rceil+1$ digits in $p$-adic expansion are equal to $b_0,b_1,...,b_{\lceil\tau K\rceil}$. (Hence, the problem is now about primes $l$ with pre-assigned digits.) This in turn is just the same as demanding that
$$
l\equiv (b_0+b_1p+\cdots +b_{M}p^{M}) \bmod p^{M+1},
$$ 
where $M:=\lceil\tau K\rceil$. We are thus trying to prove that 
there exists a prime $l$ in the above residue class such that $l\le p^K$. Hence we are down to Linnik's problem on the least prime in a residue class modulo a power of $p$. The best known unconditional result on Linnik's problem for general moduli $q$ is due to Xylouris \cite{Xyl} who proved that if $q\in \mathbb{N}$ is large enough, then for every $a$ coprime to $q$, there exists a prime 
$$
l\equiv a\bmod{q}
$$  
such that 
$$
l\le q^{5}.
$$
However, for powers $q=p^N$ of a fixed odd prime $p$, the result by Banks and Shparlinski \cite{BaS} mentioned in section 1 allows us to replace the exponent $5$ by $1/0.4736=2.1115...$. 
Therefore, $\tau=0.4736$ is an admissible exponent in the $p$-adic $l\alpha$-problem and hence we have the following theorem.

\begin{theorem} \label{padicversion} Fix an odd prime $p$. Then there exists a positive number $K_0(p)$ such that the following holds. If $K\ge K_0(p)$, then there exists a rational prime $l\le p^K$ such that
$$
||l\alpha||_p< p^{-\lceil\tau K\rceil}\le l^{-\tau}
$$ 
for $\tau=0.4736$. 
\end{theorem}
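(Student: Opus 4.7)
The plan is to combine the explicit reduction carried out earlier in this section with the lower bound of Banks and Shparlinski \cite{BaS} for the least prime in a residue class modulo a power of a fixed odd prime. No new analytic input is required: Theorem \ref{padicversion} is essentially a repackaging of their Linnik-type estimate through the translation between $p$-adic approximation of $\alpha$ and prescription of $p$-adic digits of $l$.

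Concretely, I would first fix an initial digit $b_0\in\{1,\ldots,p-1\}$ with $a_0b_0\equiv 1\bmod p$ and restrict attention to primes $l\equiv b_0\bmod p$. The triangular system \eqref{congru1}--\eqref{congru2} then shows that, for every $M\in\mathbb{N}$, the condition $\|l\alpha\|_p<p^{-M}$ uniquely determines the subsequent digits $b_1,\ldots,b_M$ of $l$, so it is equivalent to a single congruence $l\equiv a\bmod p^{M+1}$, where $a:=b_0+b_1p+\cdots+b_Mp^M$ is coprime to $p$ (and hence to $p^{M+1}$). Setting $x:=p^K$ and $M:=\lceil\tau K\rceil$ with $\tau=0.4736$, the statement reduces to producing at least one prime $l\le x$ in the fixed residue class $a\bmod p^{M+1}$.

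The Banks--Shparlinski bound yields a lower bound of the expected order of magnitude for $\pi(x;p^{M+1},a)$ whenever $p^{M+1}\le x^{0.4736}$, and for all $K\ge K_0(p)$ sufficiently large one has $\lceil\tau K\rceil+1\le 0.4736\,K$, so such a prime exists; the inequality $p^{-\lceil\tau K\rceil}\le l^{-\tau}$ is then immediate from $l\le p^K$. The only mildly delicate point is the $+1$ in $M+1$, which places the modulus at the very boundary of the Banks--Shparlinski admissible range; this is comfortably absorbed by the quantitative slack in their estimate once $K$ is taken large enough in terms of $p$, which is precisely the role of the threshold $K_0(p)$. No other obstacle arises.
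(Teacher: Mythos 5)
Your proposal is correct and follows exactly the route taken in the paper: fix the leading digit $b_0$ (coprime to $p$), use the triangular system \eqref{congru1}--\eqref{congru2} to turn $\|l\alpha\|_p<p^{-M}$ into a single congruence $l\equiv a\bmod p^{M+1}$ with $(a,p)=1$, and then appeal to the Banks--Shparlinski Linnik-type bound for the least prime in that progression. The boundary subtlety you flag (that literally $\lceil\tau K\rceil+1>0.4736\,K$) is also left implicit in the paper and is resolved exactly as you describe, by the slack coming from $0.4736$ being a strict truncation of the Banks--Shparlinski exponent, absorbed by taking $K\ge K_0(p)$.
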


Under GRH, the least prime $l \equiv a\bmod{q}$ is less than $q^{2+\varepsilon}$, and we can then replace the exponent $0.4736$ in Theorem \ref{padicversion} by $1/2-\varepsilon$. 

It is natural to try applying Harman's sieve to the $l\alpha$-problem in $p$-adic setting as it was done with success in the settings of $\mathbb{Q}$ and quadratic number fields (see \cite{BM}, \cite{BT}, \cite{HarQi} and \cite{Mat}). As outlined in section 1, if we had a full $p$-adic analogue of the method used in \cite{BHP}, we could improve the exponent $0.4736$ in Theorem \ref{padicversion} to $19/40=0.475$. 

\section{Appendix: Proof of Proposition \ref{Harmanasymp1}}
Our proof of Proposition \ref{Harmanasymp1} is along similar lines as the proof of \cite[Theorem 3.1]{HarPri}, and our exposition follows closely \cite[section 7]{BT} and \cite[section 13]{BM}, where weighted versions for quadratic number fields were proved.
We shall need the following lemma which Harman terms ``cosmetic surgery'' (see \cite[Lemma 2.2]{HarPri}).

\begin{lemma}  \label{CoSu}
For any two distinct real numbers $\rho,\gamma >0 $ and $T\geq1$ one has 
$$
   \Bigg|1_{\gamma<\rho}-\frac{1}{\pi}\int\limits_{-T}^{T}e^{i\gamma t} \frac{\sin(\rho t)}{t}\Bigg| \ll \frac{1}{T|\gamma-\rho|} ,
$$
where the implied constant is absolute.
\end{lemma}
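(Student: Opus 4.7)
The plan is to reduce Lemma \ref{CoSu} to the classical Dirichlet integral identity plus a routine tail estimate. First I would extend the integration to the whole real line and verify the exact identity $\frac{1}{\pi}\int_{-\infty}^{\infty} e^{i\gamma t}\,\frac{\sin(\rho t)}{t}\,dt = 1_{\gamma<\rho}$ for distinct $\rho,\gamma>0$. Writing $e^{i\gamma t}=\cos(\gamma t)+i\sin(\gamma t)$, the imaginary part drops out because $t\mapsto \sin(\rho t)\sin(\gamma t)/t$ is odd, and the real part reduces, via the product-to-sum formula $\sin(\rho t)\cos(\gamma t)=\tfrac{1}{2}\bigl(\sin((\rho+\gamma)t)+\sin((\rho-\gamma)t)\bigr)$, to a sum of two standard Dirichlet integrals $\int_{-\infty}^{\infty}\sin(ut)/t\,dt=\pi\,\mathrm{sgn}(u)$. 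Since $\rho,\gamma>0$, the result is $\pi$ when $\gamma<\rho$ and $0$ when $\gamma>\rho$, giving the claimed identity. This reduces the lemma to estimating the tail $\frac{1}{\pi}\int_{|t|>T}e^{i\gamma t}\,\frac{\sin(\rho t)}{t}\,dt$.

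For the tail, I would replace $\sin(\rho t)$ by $(e^{i\rho t}-e^{-i\rho t})/(2i)$, turning the integral into a linear combination of pieces of the form $J(u,T):=\int_{T}^{\infty} e^{iut}/t\,dt$ (and the analogous $(-\infty,-T)$ tail), with $u=\gamma\pm\rho$. A single integration by parts with $f(t)=1/t$ and antiderivative $e^{iut}/(iu)$ of $e^{iut}$ yields $J(u,T)=-e^{iuT}/(iuT)+(iu)^{-1}\int_{T}^{\infty} e^{iut}/t^{2}\,dt$. The boundary term at $\infty$ vanishes, the remaining integral converges absolutely and is bounded by $1/T$, and hence $|J(u,T)|\ll 1/(|u|T)$ uniformly for $u\ne0$ and $T\ge1$; the $(-\infty,-T)$ tail satisfies the same bound by symmetry. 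Applying this with $u=\gamma+\rho$ gives the harmless contribution $O(1/((\gamma+\rho)T))$, while $u=\gamma-\rho$ gives $O(1/(|\gamma-\rho|T))$. Since $|\gamma-\rho|\le\gamma+\rho$, the second dominates and the total truncation error is $O(1/(T|\gamma-\rho|))$, as required.

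There is no serious obstacle here; the only point deserving a little care is that $e^{iut}/t$ is merely conditionally integrable at infinity, so one must rely on the integration-by-parts identity (rather than an absolute bound on the integrand) to produce the required decay in $u$. The singularity of $1/t$ at the origin causes no trouble because we only ever integrate over $|t|\ge T\ge 1$, and the hypothesis $\rho\ne\gamma$ is essential precisely to keep $|\gamma-\rho|$ bounded away from zero in the denominator on the right-hand side.
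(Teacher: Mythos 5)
Your proof is correct, and it follows the standard argument for this classical estimate (the paper does not reprove the lemma; it cites \cite[Lemma~2.2]{HarPri}, where the same route is taken). The two ingredients you use — computing the full-line integral exactly via the Dirichlet integral $\int_{-\infty}^{\infty}\sin(ut)/t\,dt=\pi\,\mathrm{sgn}(u)$ together with a product-to-sum identity, and then bounding the tail $\int_{|t|>T}$ by writing $\sin(\rho t)$ in exponentials and integrating by parts once to gain the factor $1/(|u|T)$ for $u=\gamma\pm\rho$ — are exactly the right ones, and you correctly observe that the worse of the two tails, $u=\gamma-\rho$, yields the bound $O(1/(T|\gamma-\rho|))$, with the $u=\gamma+\rho$ tail dominated because $|\gamma-\rho|\le\gamma+\rho$. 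You are also right that the hypothesis $\rho\neq\gamma$ is what keeps the tail bound finite, and that the conditional convergence at infinity is precisely what forces the integration by parts rather than a crude absolute estimate. No gaps.
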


We shall use the following notations throughout our proof.

\begin{itemize}
\item  For a general condition $(C)$, we write
$$
1_{\{(C)\}} :=\begin{cases} 1 & \mbox{ if } (C) \mbox{ is satisfied,}\\ 0 & \mbox{ if } (C) \mbox{ is not satisfied.}
\end{cases}
$$
\item If $M$ is a set, we write
$$
1_{M}(x) := \begin{cases} 1 \mbox{ if } x\in M,\\ 0 \mbox{ otherwise.} \end{cases}
$$
\item If $z>0$, we write
$$
P(z):=\prod\limits_{\substack{p<z \\ p \mbox{\ \rm \scriptsize prime}}} p 
$$
and
$$
\mathbb{P}(z):=\{p<z : p \mbox{ prime}\}.
$$
\item If $w : \mathbb{N} \rightarrow \mathbb{R}_{\ge 0}$ is a function such that $\sum\limits_{n=1}^{\infty} w(n)$ converges, then we write 
$$
S(w,z):= \sum\limits_{\substack{a\in \mathbb{N}\\ (a,P(z))=1}} w(a).
$$
\end{itemize} $ $\\
Throughout the following, let $z:=x^{\beta}$. By the above notations, 
$$
S(1_{\mathcal{M}},z)=S(\mathcal{M},z)
$$
for any finite set $\mathcal{M}$ of positive integers. Our proof begins by observing that
\begin{equation} \label{4.4}
\begin{split}
    S(w,z) =\sum\limits_{b\in \mathbb{N}} w(b)\sum\limits_{\substack{d|P(z)\\ d|b}}\mu(d)
    =\sum\limits_{d|P(z)} \mu(d)\sum\limits_{a\in \mathbb{N}} w(ad)
\end{split}
\end{equation}
for any function $w:\mathbb{N}\rightarrow \mathbb{R}_{\ge 0}$ function such that $\sum\limits_{n=1}^{\infty} w(n)$ converges. In the following, let $p\in \mathcal{P}$ and $\omega_p:=1_{\mathcal{A}_p}$ and $\tilde{\omega}_p:=\lambda\cdot 1_{\mathcal{B}_p}$. Set 
\begin{equation} \label{4.5}
\Delta_p(d)=\sum\limits_{a\in \mathbb{N}}(\omega_p(ad)-\tilde{\omega}_p(ad)).
\end{equation}
Applying \eqref{4.4} for $w=\omega_p$ and $w=\tilde{\omega}_p$ yields
\begin{equation} \label{4.6}
\begin{split}
S(\omega_p,z)-S(\tilde{\omega}_p,z)=\sum\limits_{d|P(z)}\mu(d)\Delta_p(d)=&
\bigg\{\sum\limits_{\substack{d|P(z)\\ d<M}} + \sum\limits_{\substack{d|P(z)\\ d\ge M}}\bigg\}\mu(d)\Delta_p(d)\\
=& S_p^{\sharp} + S_p^{\flat}, \mbox{ say}.
\end{split}
\end{equation}
Using \eqref{typeI1} with $a_{d}= \mu(d)1_{d|P(z)}$, we deduce that 
$$
\sum\limits_{p\in \mathcal{P}} \left|S_p^{\sharp}\right|^2 \leq Y.
$$  
Therefore, to prove the proposition, it suffices to show that 
\begin{align} \label{4.7}
\sum\limits_{p\in \mathcal{P}} \left|S_p^{\flat}\right|^2 \ll Y\mathcal{L}^6. 
\end{align} 

The next step is to arrange $S_p^{\flat}$ into subsums according to the sizes of the prime factors in $d$ (where $d$ is the summation variable in \eqref{4.6}).
Take $ g:\mathbb{N} \rightarrow\mathbb{C}$ to be any  function. We may group the terms of the sum
$$
S=\sum\limits_{d|P(z)}\mu(d)g(d)
$$
according to the largest prime factor $p_1$  of $d$, getting the identity
\begin{equation} \label{4.8}
S = g(1)-\sum\limits_{p_1<z}\sum\limits_{d|P(p_1)} \mu(d)g(p_1d).
\end{equation}
Similarly, for the part $\sum\limits_{d|P(p_1)} \mu(d)g(p_1d)$, we have 
\begin{equation} \label{4.9}
    \sum\limits_{d|P(p_1)} \mu(d)g(p_1d)= g(p_1)-\sum\limits_{p_2<p_1}\sum\limits_{d|P(p_2)} \mu(d)g(p_1p_2d).
\end{equation}
Minding the innermost sum on the right-hand side above, it is obvious that the above identity can be iterated if so desired. To describe for which sub-sums iteration is beneficial, we decompose the set $\mathbb{P}(z)$ of primes less than $z$ into
\begin{equation*}
\begin{split}
    \mathbb{P}(z)=&\{p_1\in \mathbb{P}(z):p_1>x^{\alpha}\}\mathbin{\dot{\cup}}\{p_1\in \mathbb{P}(z): p_1\le x^{\alpha}\}\\
    =&\mathcal{P}_1\mathbin{\dot{\cup}}\mathcal{Q}_1,\ \mbox{say},
    \end{split}
\end{equation*}
and inductively for $s=2,3,...$, we define
\begin{equation*}
\begin{split}
  \mathcal{Q'}_s =& \{(p_1,...,p_{s-1},p_s)\in\mathbb{P}(z)^s: p_s < p_{s-1},\ (p_1,...,p_{s-1})\in \mathcal{Q}_{s-1}\}\\
  =&\mathcal{P}_s\mathbin{\dot{\cup}}\mathcal{Q}_s,\ \mbox{say},
\end{split}
\end{equation*}
where
$$
  \mathcal{P}_s := \{(p_1,...,p_{s-1},p_s)
  \in\mathcal{Q'}_s: p_1p_2\cdots p_s>x^{\alpha}\}
$$
and 
$$
  \mathcal{Q}_s := \{(p_1,...,p_{s-1},p_s)\in\mathcal{Q'}_s: p_1p_2\cdots p_s\le x^{\alpha}\}.
$$

Assuming that $g$ vanishes on arguments $a$ with $a\le x^{\alpha}$, and on applying \eqref{4.8} and \eqref{4.9}, we have 
\begin{equation*}
\begin{split}
S= & -\bigg(\sum\limits_{p_1\in\mathcal{P}_1}+\sum\limits_{p_1\in\mathcal{Q}_1}\bigg)\sum\limits_{d|P(p_1)}\mu(d)g(p_1d)\\
  =& -\sum\limits_{p_1\in\mathcal{P}_1}\sum\limits_{d|P(p_1)}\mu(d)g(p_1d)+\sum\limits_{(p_1,p_2)\in \mathcal{P}_2}\sum\limits_{d|P(p_2)}\mu(d)g(p_1p_2d)\\ &
  + \sum\limits_{(p_1,p_2)\in \mathcal{Q}_2}\sum\limits_{d|P(p_2)}\mu(d)g(p_1p_2d).
\end{split}
\end{equation*}
On iterating this process - always applying \eqref{4.9} to the $\mathcal{Q}$-part - it transpires that 
\begin{equation*}
\begin{split}
   S= &\sum\limits_{s\leq t}(-1)^s\sum\limits_{(p_1,...,p_s)\in\mathcal{P}_s}\sum\limits_{d|P(p_s)}\mu(d)g(p_1p_2\cdots p_sd)\\
   & +(-1)^t\sum\limits_{(p_1,...,p_t)\in\mathcal{Q}_t}\sum\limits_{d|P(p_t)}\mu(d)g(p_1p_2\cdots p_td)
\end{split}
\end{equation*}
for any $t\in \mathbb{N}$. Since the product of $t$ primes is greater than or equal to $2^t$, we have
\begin{equation*}
    \mathcal{Q}_t=\emptyset \mbox{ for } t>\frac{\alpha}{\log 2}\cdot \log x.
\end{equation*}
Hence,
\begin{equation*}
    S=\sum\limits_{s\leq t}(-1)^s \sum\limits_{(p_1,...,p_s)\in\mathcal{P}_s}\sum\limits_{d|P(p_s)}\mu(d)g(p_1p_2\cdots p_sd)
\end{equation*}
for 
\begin{equation} \label{4.10}
    t:=\Bigl\lfloor{\frac{\alpha\log x}{\log 2}}\Bigr\rfloor+1 \ll \log x.   
\end{equation} 

We apply this to $S_p^{\flat}$ with $g(a)=\Delta_p(a)1_{\{a\ge M\}}$. Note that since $M>x^{\alpha}$, we have $g(a)=0$ for all $a\le x^{\alpha}$, as was assumed in the above arguments. Thus,
\begin{align} \label{4.11}
    S_p^{\flat}=\sum\limits_{s\leq t}  (-1)^sS_p^{\flat}(s),
\end{align}
where 
\begin{align*}
     S_p^{\flat}(s)=\sum\limits_{\substack{(p_1,...,p_s)\in\mathcal{P}_s \\ a=p_1\cdots p_s}}\sum\limits_{\substack{d|P(p_s)\\ ad\ge M}} \mu(d)\Delta_p(ad).
\end{align*}
Another application of \eqref{4.9} gives 
\begin{equation} \label{4.12}
\begin{split}
 S_p^{\flat}(s)=&\sum\limits_{\substack{(p_1,...,p_s)\in\mathcal{P}_s \\ a=p_1\cdots p_s\\ a\ge M}} \Delta_p(a)-\sum\limits_{\substack{(p_1,...,p_s)\in\mathcal{P}_s \\ a=p_1\cdots p_s}} \sum\limits_{p_0< p_s} \sum\limits_{\substack{d|P(p_0) \\ ap_0d\ge M}} \mu(d)\Delta_p(ap_0d) \\
 =& S_p^{\flat,1}(s)-S_p^{\flat,2}(s), \ \mbox{say}. 
\end{split}
\end{equation}
Using the Cauchy-Schwarz inequality and \eqref{4.10}, we therefore have
\begin{equation} \label{inter}
\sum\limits_{p\in \mathcal{P}} |S_p^{\flat}|^2 \ll \mathcal{L} \sum\limits_{j=1}^2\sum\limits_{s\le t} \sum\limits_{p\in \mathcal{P}} |S_p^{\flat,j}(s)|^2.
\end{equation}

Given $a=p_1\cdots p_{s-1}p_s$ with 
\begin{equation*}
(p_1,...,p_{s-1},p_s)\in \mathcal{P}_s \quad \mbox{and} \quad (p_1,...,p_{s-1})\in \mathcal{Q}_{s-1},
\end{equation*}
and noting that $p_s< p_1<z=x^{\beta}$, we have
\begin{align*}
  x^{\alpha} < a=p_1\cdots p_{s-1}p_s<x^{\alpha} \cdot x^{\beta}.
\end{align*}
Using this, we find that $S_p^{\flat,1}(s)$ can be expressed as 
\begin{align*}
 \mathop{\sum\sum}\limits_{a,b \in \mathbb{N}} c_a (\omega_p(ab)-\tilde{\omega}_p(ab)),
\end{align*}
where the coefficients
\begin{align*}
c_{a}=1_{\{a\geq M\}} 1_{\{p_1\cdots p_s: (p_1,...,p_s)\in \mathcal{P}_s\}}(a)
\end{align*}
are only supported on $a$ with $x^{\alpha}<a<x^{\alpha+\beta}$. Hence by \eqref{typeII1}, 
\begin{equation*} 
\sum\limits_{p\in \mathcal{P}} \left|S_p^{\flat,1}(s)\right|^2\leq Y. 
\end{equation*}
In view of \eqref{4.10} and \eqref{inter}, it therefore remains to establish that
\begin{equation} \label{4.13}
\sum\limits_{p\in \mathcal{P}} \left|S_p^{\flat,2}(s)\right|^2\ll Y\mathcal{L}^4
\end{equation}
for all $s$ less or equal $t$. 

Expanding the definition \eqref{4.5} of $\Delta_p$, we get
\begin{equation*}
    S_p^{\flat,2}(s)=S_p^{\flat,2}(s,\omega_p)-S_p^{\flat,2}(s,\tilde{\omega}_p),
\end{equation*}
where
\begin{equation*}
\begin{split}
 S_p^{\flat,2}(s,w):=&\sum\limits_{\substack{(p_1,...,p_s)\in\mathcal{P}_s \\ a=p_1\cdots p_s}} \sum\limits_{p_0< p_s} \sum\limits_{\substack{d|P(p_0) \\ ap_0d\ge M}} \mu(d)\sum\limits_{b\in\mathbb{N}} w(abp_0d)\\
 =&\sum\limits_{\substack{(p_1,...,p_s)\in\mathcal{P}_s \\ a=p_1\cdots p_s}}\sum\limits_{n\in\mathbb{N}} \sum\limits_{p_0< p_s}\mathop{\sum\sum}\limits_{\substack{b,d\\ d|P(p_0)\\ bp_0d=n \\ ap_0d\ge M}}\mu(d)w(an).
 \end{split}
 \end{equation*}
 In order to apply \eqref{typeII1}, we must disentangle the variables $a$ and $n$ in the above summation. We have 
 \begin{equation*}
 S_p^{\flat,2}(s,w)=\sum\limits_{\substack{(p_1,...,p_s)\in\mathcal{P}_s \\ a=p_1\cdots p_s}}\sum\limits_{n\in\mathbb{N}} \sum\limits_{p_0< p_s}\mathop{\sum\sum}\limits_{\substack{b,d\\ d|P(p_0)\\ bp_0d=n}} \mu(d)\chi(a,d,p_0,p_s) w(an),
 \end{equation*}
 where 
 \begin{equation*}
    \chi(a,d,p_0,p_s)=1_{\{ap_0d\ge M\}}1_{\{p_0<p_s \}}.
 \end{equation*}
We choose some real number $\rho$ with $|\rho|\leq 1/2$ and $\{M+\rho\}=1/2$, where $\{.\}$ denotes the fractional part. Then the condition $ap_0d\ge M$ is equivalent to $\log(ap_0d)\ge \log(M+\rho)$, and we have 
\begin{equation*}
    |\log(ap_0d)- \log(M+\rho)|\ge \log\frac{x+1}{x+1/2}\ge \frac{1}{3x}.
\end{equation*}
Therefore, Lemma \ref{CoSu} shows that 
\begin{equation*}
    1_{\{ap_0d\ge M\}}=1-\frac{1}{\pi}\int\limits_{-T}^{T}(ap_0d)^{it}\sin(t\log(M+\rho))\frac{dt}{t}+ O\left(\frac{x}{T}\right)
\end{equation*}
for every $T\geq 1$. Similarly,
\begin{equation*}
    1_{\{p_0<p_s\}} =\frac{1}{\pi}\int\limits_{-T}^{T}e^{it/2}e^{itp_0}\sin(tp_s)\frac{dt}{t}+O\left(\frac{1}{T}\right).
\end{equation*}
Thus,
\begin{equation} \label{4.17}
\begin{split}
  S_{p}^{\flat,2}(s,w)=&\frac{1}{\pi}\int\limits_{-T}^{T}\mathop{\sum\sum}\limits_{a,n\in \mathbb{N}}c_a(t)b_n(t)w(an)\frac{dt}{t}\\& - \frac{1}{\pi^2}\int\limits_{-T}^{T}\int\limits_{-T}^{T}\mathop{\sum\sum}\limits_{\substack{a,n\in \mathbb{N}}}c_a(t,\tau)b_n(t,\tau)w(an)\frac{d\tau}{\tau}\frac{dt}{t}\\ &+O\Bigg(\Bigg(\frac{x}{T}+\frac{1}{T}\int\limits_{-T}^{T}|\sin{(\tau\log(M+\rho))}|\frac{d\tau}{\tau}\Bigg)\times\\ &  \Bigg(\sum\limits_{\substack{(p_1,...,p_s)\in\mathcal{P}_s \\ a=p_1\cdots p_s}}\sum\limits_{n\in\mathbb{N}} 
\sum\limits_{p_0< p_s}\mathop{\sum\sum}\limits_{\substack{b,d\\ d|P(p_0)\\ bp_0d=n}}  w(an) \Bigg)\Bigg) 
\end{split}
\end{equation}
with coefficients 
\begin{equation} \label{4.18}
\begin{split}
    c_{a}(t):= &
       \begin{cases}
       \sin(tp_s) & \mbox{ if there is } (p_1,...,p_s)\in\mathcal{P}_s \mbox{ such that } a=p_1\cdots p_s,   \\
       0 & \mbox{ otherwise,}
     \end{cases}\\
    b_n(t):=&\sum\limits_{p_0\in\mathbb{P}(z)}\mathop{\sum\sum}\limits_{\substack{b,d\\ d|P(p_0)\\ bp_0d=n}}e^{it/2}e^{itp_0}\mu(d),\\
    c_{a}(t,\tau):=& c_{a}(t)a^{i\tau}\sin(\tau\log(M+\rho)),\\
    b_{n}(t,\tau):=&\sum\limits_{p_0\in\mathbb{P}(z)}\mathop{\sum\sum}\limits_{\substack{b,d\\ 
d|P(p_0)\\ bp_0d=n}} e^{it/2}e^{itp_0}\mu(d)(p_0d)^{i\tau}.
\end{split}
\end{equation}
Now we set $T:=x^3$. Then the $O$-term in \eqref{4.17} is bounded by 
$$
\ll \frac{1}{x^2} \cdot \sum\limits_{a\in \mathbb{N}} \tau_4(a)w(a)\ll \frac{\mathcal{L}^{10}}{x} \mbox{ if } w=\omega,\tilde{\omega},
$$
where $\tau_k(a)$ is the number of representations of $a\in \mathbb{N}$ as a product of $k$ positive integers. Hence we have
\begin{equation} \label{4.18}
\begin{split}
 & \sum\limits_{p\in \mathcal{P}} \left|S_{p}^{\flat,2}(s)\right|^2\\ \ll & \sum\limits_{p\in \mathcal{S}} \left|\int\limits_{-x^3}^{x^3}\mathop{\sum\sum}\limits_{a,n\in \mathbb{N}}c_a(t)b_n(t)(\omega_p(an)-\tilde{\omega}_p(an))\frac{dt}{t} \right|^2\\& + \sum\limits_{p\in \mathcal{S}} \left|
  \int\limits_{-x^3}^{x^3}\int\limits_{-x^3}^{x^3}\mathop{\sum\sum}\limits_{\substack{a,n\in \mathbb{N}}}c_a(t,\tau)b_n(t,\tau)(\omega_p(an)-\tilde{\omega}_p(an))\frac{d\tau}{\tau}\frac{dt}{t}\right|^2 + 1. 
\end{split}
\end{equation}

We proceed by gathering some intermediate information on the sizes of the coefficients before applying \eqref{typeII1} and the Cauchy-Schwarz inequality. Clearly, 
\begin{equation*}
     |b_n(t)|,|b_n(t,\tau)|\leq \tau(n).
\end{equation*}
For the other coefficients we have
\begin{equation*}
     |c_a(t)|\leq \min\left\{1, \eta_1^{-1}|t|\right\}
\end{equation*}
and 
\begin{equation*}
     |c_a(t,\tau)|\leq \min\left\{1, \eta_1^{-1}|t|, \eta_2^{-1}|\tau|, \eta_1^{-1}\eta_2^{-1}|t\tau|\right\},
\end{equation*}
where $\eta_1:=x^{-1/2}$ and $\eta_2:=(\log(x+1/2))^{-1}$. The small sizes of $c_a(t)$ and $c_a(t,\tau)$ for small $t$'s and $\tau$'s compensate the large values of $1/t$ and $1/\tau$ in the integrands in these cases. 
In view of this, we bound the right-hand side of \eqref{4.18} by 
\begin{equation*}
\begin{split}
\le & \sum\limits_{p\in \mathcal{S}} \left(\int\limits_{-x^3}^{x^3} \max\left\{\eta_1,|t|\right\}^{-1} \left|
 \mathop{\sum\sum}\limits_{a,n\in \mathbb{N}}c_a^{\ast}(t)b_n(t)(\omega_p(an)-\tilde{\omega}_p(an)) \right| dt \right)^2 \\
 + & 
\sum\limits_{p\in \mathcal{S}}
\left( \int\limits_{-x^3}^{x^3}\int\limits_{-x^3}^{x^3} \max\left\{\eta_1,|t|\right\}^{-1} \max\left\{\eta_2,|\tau|\right\}^{-1}
\right.\\ & \left. \times \left|\mathop{\sum\sum}\limits_{\substack{a,n\in \mathbb{N}}}c_a^{\ast}(t,\tau)b_n(t,\tau)(\omega_p(an)-\tilde{\omega}_p(an)) \right| d\tau dt\right)^2 +O(1),  
\end{split}
\end{equation*}
where 
$$
c_a^{\ast}(t):= \begin{cases} \eta_1 c_a(t)/t & \mbox{ if } |t|\le \eta_1 \\ c_a(t) & \mbox{ if } |t|>\eta_1 \end{cases}
$$
and 
$$
c_a^{\ast}(t,\tau):= \begin{cases} \eta_1\eta_2 c_a(t,\tau)/(t\tau) & \mbox{ if } |t|\le \eta_1 \mbox{ and } |\tau|\le \eta_2 \\
\eta_1c_a(t,\tau)/t & \mbox{ if } |t|\le \eta_1 \mbox{ and } |\tau|> \eta_2\\
\eta_2 c_a(t,\tau)/\tau & \mbox{ if } |t|> \eta_1 \mbox{ and } |\tau|\le \eta_2\\
c_a(t,\tau) & \mbox{ if } |t|>\eta_1 \mbox{ and } |\tau|> \eta_2.
\end{cases}
$$
We note that the coefficients $c_a^{\ast}(t)$ and $c_a^{\ast}(t,\tau)$ satisfy
$$
|c_a^{\ast}(t)|\le 1 \mbox{ and } |c_a^{\ast}(t,\tau)|\le 1 \mbox{ for all } t,\tau\in \mathbb{R}.
$$
Now the desired inequality \eqref{4.13} follows by an application of the Cauchy-Schwarz inequality for integrals and \eqref{typeII1}. This completes the proof. $\Box$

\end{document}